\documentclass[11pt]{amsart}
\usepackage{epsfig}
\usepackage{graphics}
\usepackage{mathrsfs}
\usepackage{epstopdf}
\usepackage{graphicx}
\usepackage{floatrow}
\usepackage[colorlinks]{hyperref}
\graphicspath{ {./images/} }
\usepackage{tikz}

\newtheorem{theorem}{Theorem}[section]

\newtheorem{lemma}[theorem]{Lemma}

\newtheorem{corollary}[theorem]{Corollary}

\theoremstyle{definition}

\newtheorem{remark}[theorem]{Remark}

\def\S{{\mathbb{S}}}

\theoremstyle{remark}


 
\def\mod{{\rm Mod}}

 \begin{document}

\newenvironment{prooff}{\medskip \par \noindent {\it Proof}\ }{\hfill
$\square$ \medskip \par}
    \def\sqr#1#2{{\vcenter{\hrule height.#2pt
        \hbox{\vrule width.#2pt height#1pt \kern#1pt
            \vrule width.#2pt}\hrule height.#2pt}}}
    \def\square{\mathchoice\sqr67\sqr67\sqr{2.1}6\sqr{1.5}6}
\def\pf#1{\medskip \par \noindent {\it #1.}\ }
\def\endpf{\hfill $\square$ \medskip \par}
\def\demo#1{\medskip \par \noindent {\it #1.}\ }
\def\enddemo{\medskip \par}
\def\qed{~\hfill$\square$}

\date{\today}
 \title[Low-slope Lefschetz fibrations]
{Low-slope Lefschetz fibrations}

\author[Adalet \c Cengel, Mustafa Korkmaz]
{Adalet \c Cengel, Mustafa Korkmaz}


 \address{Department of Mathematics, Middle East Technical University,
 06800 Ankara, Turkey}
 \email{adalet.cengel@gmail.com}
\email{korkmaz@metu.edu.tr}

 \date{\today}
\thanks{Adalet Cengel was supported by TUBITAK postdoctoral fellowship BIDEB-2219, No:1059B191801519. }

\begin{abstract}
For $g\geq 3$, we construct genus-$g$ Lefschetz fibrations over the two-sphere whose slopes are arbitrarily close to $2$. The total spaces of the Lefschetz fibrations can be chosen to be minimal and simply connected. It is also shown that the infimum and the supremum of slopes all Lefschetz fibrations are not realized as slopes.
\end{abstract}

 \maketitle

 \setcounter{section}{0}

\section{Introduction}
By the work of Donaldson~\cite{Donaldson2},  every closed symplectic $4$-manifold, perhaps after blowing up, admits a Lefschetz fibration over the two-sphere. Conversely, Gompf~\cite{GompfStipsicz} showed that
the total space of genus-$g$ Lefschetz fibration is a symplectic $4$-manifold if $g\geq 2$. 
For a given product of positive Dehn twists representing the identity of the mapping class group of a closed oriented surface of genus $g$,  one can explicitly construct a genus-$g$ 
Lefschetz fibration over the two-sphere $\S^2$. 
Conversely,  every Lefschetz fibration with base $\S^2$ gives a positive factorization
 of the identity, unique up to Hurwitz moves and global conjugation. This gives a combinatorial way to study  
symplectic $4$-manifolds.

Let $f:X \rightarrow \S^2 $ be a (nontrivial) relatively minimal genus-$g$ Lefschetz fibration, where
$X$ is a closed oriented smooth $4$-manifold.  Let $ c_1^2 (X)$ and 
$\chi_h(X)$ denote the first Chern number and the holomorphic Euler characteristic of $X$, respectively. 
The \textit{slope}  of $f$ is defined as the quotient
\[
\lambda_{f}=\displaystyle \frac{c_1^2(X)+8(g-1)}{\chi_h(X)+(g-1)}.
\]

Xiao~\cite{Xiao} proved that relatively minimal holomorphic genus-$g$ Lefschetz fibrations over a
genus-$k$ curve satisfy the \textit{slope inequality} $4-4/g \leq\lambda_{f}$. 
Hain conjectured that for $g\geq 2$ every relatively minimal genus-$g$ Lefschetz fibration over $ \S^2$
satisfies the slope inequality (cf. {\cite[Conjecture 4.12] {en}} and \cite[Question 5.10]{abkp}).
In~\cite{MiyachiShiga} and~\cite{Monden}, several examples  of Lefschetz fibrations violating this
conjecture are constructed. In all of these examples, the slopes of the Lefschetz fibrations 
are close to $4-4/g$ for large $g$. 
We refer reader to~\cite{Monden} for a history of the slope conjecture.

The purpose of this paper is to construct genus-$g$ Lefschetz fibrations with base $\S^2$
having slope arbitrarily close to $2$ for every $g\geq 3$. All of the slopes we get are greater than $2$.
Moreover, the total spaces of these Lefschetz fibrations  can be chosen to be simply connected and minimal. 
We also show that the infimum and the supremum of the slopes of all Lefschetz fibrations cannot
be realized as the slope of any Lefschetz fibration.  See Section~\ref{sec:results}. 


Here is the idea of the proof of our result. Consider a  Lefschetz fibration $f_0$
of genus $g\geq 3$ with the base $\S^2$. Its monodromy contains $C^r$ for some Dehn twist $C$ 
about a nonseparating simple closed curve and for $r\geq 1 $, as was shown by Smith in~\cite{abkp, Smith}
that the monodromy cannot be contained in the Torelli group.
By taking a twisted fiber sum of the Lefschetz fibration with itself carefully,  one can guarantee that
the resulting monodromy contains a product $(DE)^r$, where $D$ and $E$ are positive Dehn twists about 
two disjoint curves cobounding a subsurface of genus $h$ in a regular fiber for any chosen 
$1\leq h\leq g-1$. Thus each
$DE$ can be replaced by a product of $(2h+1)(2h+2)$ Dehn twists about nonseparating curves
by using the odd chain relation. These two operations can be applied the resulting Lefschetz fibration repeatedly to get Lefschetz fibration $f_n$ for every $n$.
It turns out that the slopes of $f_n$ converge to $4h/(h+1)$, which is independent of the
initial choice $f_0$. See Theorem~\ref{thm:thm124}.

\medskip
\noindent
{\bf Acknowledgement:}
This paper is the improved version of the first author's PhD thesis, where 
a Lefschetz fibration of genus $g$  with slope  
\[
4-\displaystyle \frac{4}{g}- \displaystyle \frac{4(g-2)}{g(g+2)}
\]
was constructed for each $g\geq 3$.  We would like to thank \.Inan\c{c} Baykur, 
Noriyuki Hamada and Naoyuki Monden for their comments and 
their interests in this work.


\section{Preliminaries and Notations}
This section gives the necessary background and the known results used in our proofs. 

\subsection{Mapping Class Groups}
Let $\Sigma_{g,k}$ be a compact connected oriented surface of 
genus $g$ with $k\geq 0$ boundary components, and let ${\rm Mod}(\Sigma_{g,k})$ 
be the mapping class group of $\Sigma_{g,k}$, the group consisting of isotopy classes of 
orientation-preserving
diffeomorphisms of $\Sigma_{g,k}\to \Sigma_{g,k}$ fixing all  points on the boundary. 
Isotopies are also assumed to be the identity on the boundary. In this paper, we have
$k\leq 1$.
If $k=0$, we drop it from the notation and write $\Sigma_{g}$ and ${\rm Mod}(\Sigma_{g})$.

In this article, we always denote simple closed curves by the lowercase letters, and  
the positive (right) Dehn twist about them by the corresponding capital letters. If $f$ and $h$ are two diffeomorphisms
of $\Sigma_{g,k}$, the composition $fh$ means that $h$ is applied first. The conjugation
$fhf^{-1}$ is denoted by $h^f$.  All diffeomorphisms and curves are considered up to
isotopy. 

Throughout the paper, we fix the surface $\Sigma_{g,1}$ in Figure~\ref{fig:thesurface} and the curves
on it. The closed surface $\Sigma_g$ is the surface obtained from $\Sigma_{g,1}$ by gluing a 
disk along the boundary component. A curve on $\Sigma_{g,1}$ gives rise to a curve on $\Sigma_{g}$; 
these two curves are denoted by the same letter.

Dehn twists are basic mapping classes. A product of Dehn twists representing the identity
in ${\rm Mod}(\Sigma_{g,k})$ is called a \textit{relator}.
We first remind the following relations among Dehn twists. For the proofs the reader is referred to~\cite{ farbmargalit, Korkmaz4}.

\begin{itemize}
\item \textbf{Conjugation relation:} If $f\in \mod(\Sigma_{g,k})$ and if $c$ and $d$ are
 	two  simple closed curves on $\Sigma_{g,k}$ with $f(c)=d$, then $C^f=D$.
\item \textbf{Commutativity relation:} If $c$ and $d$ are two disjoint simple 
	closed curves on $\Sigma_{g,k}$, then
	\[
	CD=DC.
	\] 
\item \textbf{Braid relation:} If $c$ intersects $d$ transversely at one point, then
	\[
	CDC=DCD.
	\] 
\item \textbf{Lantern relation:} Let $a_1,a_2,a_3,a_4$ be four disjoint simple closed curves 
bounding a subsurface $\Sigma$  homeomorphic to a sphere with four boundary components.
Then there are three 
simple closed curves $x,y,z$ on $\Sigma$ such that the Dehn twists about them satisfy the 
lantern relation 
\[
  A_{1} A_{2} A_3 A_4=XYZ.
\]
The relator 
\begin{equation} \label{eqn:lanternrelator}
  \mathscr{L}=Z^{-1}Y^{-1}X^{-1}A_{1} A_{2} A_3 A_4
\end{equation} 
is a \textit{lantern relator}.

\item \textbf{Chain relation:} A \textit{chain} $(a_1,a_2, \ldots, a_n)$ of length $n$ is an ordered $n$-tuple 
	 of simple closed curves on  $\Sigma_{g,k}$ satisfying
\begin{itemize}
	\item[(i)] $a_i$ intersects  $a_{i+1}$ transversely at one point if $1\leq i \leq n-1$, and
	\item [(ii)]$a_i$ and $a_{j}$ are disjoint if $|i-j|>1.$
\end{itemize}

 If the length of the chain is odd, say $n=2h+1,$ a regular neighborhood of  
 $a_1 \cup a_2 \cup \cdots \cup a_{2h+1}$ is a genus-$h$ subsurface 
 of $\Sigma_{g,k}$ with two boundary components. If $d$ and 
 $e$ are two simple closed curves parallel to these 
  two boundary components, then, in ${\rm Mod}(\Sigma_{g,k})$,  we have the relation  
\[
(A_{1} A_2 \cdots A_{2h+1})^{2h+2}=DE,
\]
called an \textit{odd chain relation}~\cite{Wajnryb}. In this case,
\begin{equation} \label{eqn:oddrelator}
  \mathscr{C}_{2h+1}=(A_{1} A_{2} \cdots A_{2h+1})^{2h+2}D^{-1}E^{-1}
\end{equation} 
is an \textit{odd chain relator}.

If the length of the chain is even, say $n=2h,$ a regular neighborhood of 
$a_1 \cup a_2 \cup \cdots \cup a_{2h}$ is a genus-$h$ subsurface of $\Sigma_{g,k}$ 
with one boundary component. If $d$ is a simple closed curve parallel to 
this boundary component, then we have the relation 
\[
(A_{1} A_{2} \cdots A_{2h})^{4h+2}=D
\] 
in ${\rm Mod}(\Sigma_{g,k})$,  called an \textit{even chain relation}~\cite{Wajnryb}. We say that
\begin{equation} \label{eqn:evenrelator}
  \mathscr{C}_{2h}=(A_{1} A_{2} \cdots A_{2h})^{4h+2}D^{-1}
\end{equation} 
is an \textit{even chain relator}.

\item \textbf{Hyperelliptic relation:} 
Suppose that the surface is closed, i.e., $k=0$.
The Dehn twists about the curves of the maximal chain $(c_1, \ldots, c_{2g+1})$ on $\Sigma_g$ 
satisfy the relation 
\[
(C_{1} C_2 \cdots C_{2g}C_{2g+1}^2 C_{2g}\cdots C_2C_{1})^2=1,
\]
called the \textit{hyperelliptic relation}. The product
\begin{equation} \label{eqn:hyellrelator}
  h_g=(C_{1} C_2 \cdots C_{2g}C_{2g+1}^2 C_{2g}\cdots C_2C_{1})^2.
\end{equation} 
is called the \textit{hyperelliptic relator}.
\end{itemize}

\begin{figure}
\begin{tikzpicture}[scale=0.7]
\begin{scope} [yshift=0cm, scale=0.8]
 \draw[very thick, violet, rounded corners=10pt] (10.02,-1.9) -- (-3.7,-1.9)--
                                     	 (-4.6, -0.7)--(-4.6, 0.7)-- (-3.7,1.9) -- (10.02,1.9);

\draw[very thick, violet, rounded corners=6pt] (10,1.91) ..controls  (9.6,1.4) and (9.6,-1.4) ..(10,-1.91);
\draw[very thick, violet, rounded corners=6pt] (10,1.91) ..controls  (10.4,1.4) and (10.4,-1.4) ..(10,-1.91);
	 
\draw[very thick, violet] (-2.6,0) circle [radius=0.6cm];
\draw[very thick, violet] (0,0) circle [radius=0.6cm];
\draw[very thick, violet] (5.2,0) circle [radius=0.6cm];
\draw[very thick, violet] (7.8,0) circle [radius=0.6cm];
\draw[very thick, fill, violet] (2.2,0) circle [radius=0.03cm];
\draw[very thick, fill, violet] (2.6,0) circle [radius=0.03cm];
\draw[very thick, fill, violet] (3,0) circle [radius=0.03cm];

\draw[thick, red] (-2.6,0) circle [radius=0.9cm];
\draw[thick, red] (0,0) circle [radius=0.9cm];
\draw[thick, red] (5.2,0) circle [radius=0.9cm];
\draw[thick, red] (7.8,0) circle [radius=0.9cm];

\draw[thick, red, rounded corners=6pt, xshift=-3.9cm] (-0.7,-0.03) -- (-0.4,-0.2)--(0.4,-0.2) --(0.7,-0.03);
\draw[thick, red, dashed, rounded corners=6pt, xshift=-3.9cm] (-0.7,0.03) -- (-0.4,0.2)--(0.4,0.2) --(0.7,0.03);

\draw[thick, red, rounded corners=6pt, xshift=-1.3cm] (-0.7,-0.03) -- (-0.4,-0.2)--(0.4,-0.2) --(0.7,-0.03);
\draw[thick, red, dashed, rounded corners=6pt, xshift=-1.3cm] (-0.7,0.03) -- (-0.4,0.2)--(0.4,0.2) --(0.7,0.03);

\draw[thick, red, rounded corners=6pt, xshift=1.3cm] (-0.7,-0.03) -- (-0.4,-0.2)--(0.4,-0.2);
\draw[thick, red, dashed, rounded corners=6pt, xshift=1.3cm] (-0.7,0.03) -- (-0.4,0.2)--(0.4,0.2);

\draw[thick, red, rounded corners=6pt, xshift=3.9cm]  (-0.4,-0.2)--(0.4,-0.2) --(0.7,-0.03);
\draw[thick, red, dashed, rounded corners=6pt, xshift=3.9cm]  (-0.4,0.2)--(0.4,0.2) --(0.7,0.03);

\draw[thick, red, rounded corners=6pt, xshift=6.5cm] (-0.7,-0.03) -- (-0.4,-0.2)--(0.4,-0.2) --(0.7,-0.03);
\draw[thick, red, dashed, rounded corners=6pt, xshift=6.5cm] (-0.7,0.03) -- (-0.4,0.2)--(0.4,0.2) --(0.7,0.03);
\draw[thick, blue, rounded corners=4pt, xshift=-2.6cm] (0.02,0.6) -- (0.2, 0.9)--(0.2,1.6) --(0.02,1.9);
 \draw[thick, blue, dashed, rounded corners=4pt, xshift=-2.6cm] (-0.02,0.6) -- (-0.2, 0.9)--(-0.2,1.6) --(-0.02,1.9);
\draw[thick, blue, rounded corners=4pt, xshift=-2.6cm] (0.02,-0.6) -- (0.2, -0.9)--(0.2,-1.6) --(0.02,-1.9);
 \draw[thick, blue, dashed, rounded corners=4pt, xshift=-2.6cm] (-0.02,-0.6) -- (-0.2, -0.9)--(-0.2,-1.6) --(-0.02,-1.9);

\draw[thick, blue, rounded corners=4pt] (0.02,0.6) -- (0.2, 0.9)--(0.2,1.6) --(0.02,1.9);
 \draw[thick, blue, dashed, rounded corners=4pt] (-0.02,0.6) -- (-0.2, 0.9)--(-0.2,1.6) --(-0.02,1.9);
\draw[thick, blue, rounded corners=4pt] (0.02,-0.6) -- (0.2, -0.9)--(0.2,-1.6) --(0.02,-1.9);
 \draw[thick, blue, dashed, rounded corners=4pt] (-0.02,-0.6) -- (-0.2, -0.9)--(-0.2,-1.6) --(-0.02,-1.9);

\draw[thick, blue, rounded corners=4pt, xshift=5.2cm] (0.02,0.6) -- (0.2, 0.9)--(0.2,1.6) --(0.02,1.9);
 \draw[thick, blue, dashed, rounded corners=4pt, xshift=5.2cm] (-0.02,0.6) -- (-0.2, 0.9)--(-0.2,1.6) --(-0.02,1.9);
\draw[thick, blue, rounded corners=4pt, xshift=5.2cm] (0.02,-0.6) -- (0.2, -0.9)--(0.2,-1.6) --(0.02,-1.9);
 \draw[thick, blue, dashed, rounded corners=4pt, xshift=5.2cm] (-0.02,-0.6) -- (-0.2, -0.9)--(-0.2,-1.6) --(-0.02,-1.9);

\draw[thick, blue, rounded corners=4pt, xshift=7.8cm] (0.02,0.6) -- (0.2, 0.9)--(0.2,1.6) --(0.02,1.9);
 \draw[thick, blue, dashed, rounded corners=4pt, xshift=7.8cm] (-0.02,0.6) -- (-0.2, 0.9)--(-0.2,1.6) --(-0.02,1.9);
\draw[thick, blue, rounded corners=4pt, xshift=7.8cm] (0.02,-0.6) -- (0.2, -0.9)--(0.2,-1.6) --(0.02,-1.9);
 \draw[thick, blue, dashed, rounded corners=4pt, xshift=7.8cm] (-0.02,-0.6) -- (-0.2, -0.9)--(-0.2,-1.6) --(-0.02,-1.9);
\draw[thick, green, rounded corners=15pt] (-3.7,1.4)--(1.5,1.4)--(1.5, -1.4)--(-3.7,-1.4)--cycle;
\node[scale=0.8] at (1.5,1.4) {$u$};

\node[scale=0.8] at (-4.1 ,-0.5) {$c_1$};
\node[scale=0.8] at (-1.3 ,-0.6) {$c_3$};
\node[scale=0.8] at (-1.65 ,0.8) {$c_2$};
\node[scale=0.8] at (0.95 ,0.8) {$c_4$};
\node[scale=0.8] at (8.8 ,0.8) {$c_{2g}$};
\node[scale=0.8] at (-2.6 ,2.4) {$d_1$};
\node[scale=0.8] at (0 ,2.4) {$d_2$};
\node[scale=0.8] at (5.2 ,2.4) {$d_{g-1}$};
\node[scale=0.8] at (7.8 ,2.4) {$d_g$};
\node[scale=0.8] at (-2.6 ,-2.4) {$e_1$};
\node[scale=0.8] at (0 ,-2.4) {$e_2$};
\node[scale=0.8] at (5.2 ,-2.4) {$e_{g-1}$};
\node[scale=0.8] at (8.2 ,-2.4) {$e_g=c_{2g+1}$};
\node[scale=0.8] at (10, 2.4) {$\delta$};

\end{scope}
\end{tikzpicture}
\caption{The surface $\Sigma_{g,1}$ and the curves on it. The closed surface $\Sigma_g$ is 
obtained from $\Sigma_{g,1}$ by gluing  a $2$-disk along $\delta$.}\label{fig:thesurface}
\end{figure}
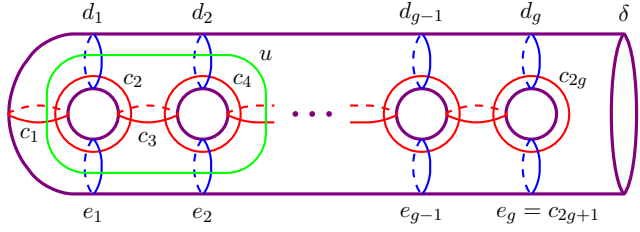


\subsection{Lefschetz fibrations} \label{sec:sec2-2}

Let $\S^2$ denote the $2$-sphere and 
let $X$ be a closed connected oriented smooth $4$-manifold.
A Lefschetz fibration on $X$ is a smooth surjective map $f:X\rightarrow \S^2$ 
with finite set of critical points $P=\{p_1,\ldots, p_n\}$  such that around 
each critical point $p_i$ and critical value  $f(p_i)$, there are orientation-preserving 
complex coordinate charts on which $f$ is of the form  $f(z_1,z_2)=z_1^2+z_2^2$. 
(In general, the base of a Lefschetz fibration can be a closed orientable surface, but
we only consider those with the base $\S^2$.)
We may assume that each singular fiber $f^{-1}(f(p_i))$ contains only 
 one singular point, which can be achived by a small perturbation of the fibration.
Throughout the paper, we assume that Lefschetz fibrations are \textit{nontrivial} and 
\textit{relatively minimal}, i.e. there is at least one singular fiber and no fiber contains a 
$(-1)$-sphere.

A regular fiber of $f$ is a closed connected orientable surface  $\Sigma_g$ of genus $g$.
The number $g$ is called the \textit{genus} of the Lefschetz fibration. 
A \textit{singular fiber} $f^{-1}(f(p_i))$ 
is obtained from a regular fiber by collapsing a simple closed curve $a_i$ on $\Sigma_g$, called a
\textit{vanishing cycle}, to a point. The diffeomorphism type of a regular
neighborhood of a singular fiber is 
determined by that vanishing cycle.

Let $f:X\rightarrow \S^2$ be a Lefschetz fibration.
For a fixed regular value $b_0,$ let us fix an identification of the 
regular fiber $f^{-1}(b_0)$ with the surface $\Sigma_g$. The 
Lefschetz fibration $f$ is then determined by a factorization
 \[
  A_1A_2\cdots A_n=1,
  \] 
called the \textit{monodromy} of the Lefschetz fibration, of the identity into positive Dehn twists in the mapping class group
  $\mod(\Sigma_g)$.
 The monodromy of $f$ is unique up to  a sequence of Hurwitz moves 
 and a global conjugation by a diffeomorphism. Recall that a Hurwitz move is 
\hspace{-0.5pt}
\[
A_1 A_2 \cdots A_iA_{i+1} \cdots A_n \sim A_1A_2 \cdots 
A_{i+1}^{A_{i}} A_{i} \cdots A_n, \]
or
\[
A_1A_2 \cdots A_{i} A_{i+1} \cdots A_n \sim A_1A_2 \cdots 
A_{i+1}  A_i^{A_{i+1}^{-1}} \cdots A_n.
\]

A \textit{section} of a Lefschetz fibration $f:X\rightarrow \S^2$ is 
a map $s:\S^2\rightarrow X$ such that the composition 
 $fs $ is the identity map of $\S^2$. 
 
It follows from from the standard theory of Lefschetz fibrations that if 
$f:X\rightarrow \S^2$ is a Lefschetz fibration with 
a regular fiber $\Sigma_g$ and with the 
monodromy $A_1 A_2 \cdots A_n =1$, the fundamental group $\pi_1(X)$ of $X$ is isomorphic
to a quotient of the group 
\begin{equation*}
\Gamma_f=\pi_1(\Sigma_g)/\langle a_1, a_2, \ldots, a_n \rangle;
\end{equation*}
$\pi_1(\Sigma_g)$ divided by the  normal closure of the vanishing cycles  (cf. \cite{GompfStipsicz}).
 Moreover, if $f$ has a section, then the groups $\pi_1(X)$ and $\Gamma_f$ are isomorphic.

For $i=1,2$, let $f_{i}: X_{i}\rightarrow \S^2$  be a Lefschetz fibration of 
genus $g$ with a regular fiber $\Sigma_g$ and with the monodromy 
$W_i$. After removing tubular neighborhoods $\nu\Sigma_g$
of $\Sigma_g$ from each $X_{i}$, we identify the boundaries of $X_{1}-\nu\Sigma_g$ 
and $X_{2}-\nu\Sigma_g$ via a fiber-preserving orientation-reversing diffeomorphism $\psi$. 
The (twisted) \textit{fiber sum} of $X_{1}$ and $X_{2}$ is  defined as
\[
f_1 \sharp_{\psi} f_2: X_{1} \sharp_{\psi} X_{2}\rightarrow \S^2,
\]
where $X_{1} \sharp_{\psi} X_{2}=(X_{1}-\nu\Sigma_g)\bigcup_{\psi}(X_{2}-\nu\Sigma_g).$
If $\psi$ is unimportant, then we drop it from the notation. The monodromy of the 
new Lefschetz fibration  is $W_1W_2^{\psi}$.

\begin{theorem}
[\cite{Usher,baykur1}] \label{thm:minimality}
Let $g\geq 2$. A fiber sum of any two  Lefschetz fibrations of genus $g$
is minimal.
\end{theorem}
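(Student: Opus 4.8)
The plan is to deduce this from Gompf's symplectic fiber sum construction together with Usher's theorem on the minimality of symplectic sums. Since $g\geq 2$, Gompf's theorem equips each $X_i$ with a symplectic form $\omega_i$ for which the chosen regular fiber $\Sigma_g\subset X_i$ is a symplectic submanifold; as its normal bundle is trivial, the two copies of $\Sigma_g$ are glued with matching opposite normal data, and for a suitable representative $\psi$ of the gluing diffeomorphism in its isotopy class the fiber sum $X_1\sharp_\psi X_2$ is exactly the symplectic (Gompf) sum of $(X_1,\omega_1)$ and $(X_2,\omega_2)$ along these surfaces. Because the fiber-sum diffeomorphism type does not depend on $\psi$, it is enough to prove that this symplectic sum is minimal.

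Next I would apply Usher's theorem~\cite{Usher}, which asserts that a symplectic sum $Z=M_1\sharp_\Sigma M_2$ along a connected symplectic surface $\Sigma$ is minimal unless one of a short explicit list of exceptional configurations occurs. Every configuration on that list other than ``$M_i\setminus\nu\Sigma$ already contains a symplectic $(-1)$-sphere'' forces $g(\Sigma)\leq 1$: in the remaining cases $\Sigma$ is a sphere or a square-$0$ torus, or one summand is $\mathbb{CP}^2$, $S^2\times S^2$, or a small blow-up thereof with $\Sigma$ a line, a conic, or a section. Since our fiber has genus $g\geq 2$, these are vacuous, and the only possibility to rule out is that $X_i\setminus\nu\Sigma_g$ contains a symplectic $(-1)$-sphere for some $i$.

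To exclude this, fix an $\omega_i$-tame almost complex structure $J$ on $X_i$ for which $f_i$ is pseudoholomorphic, so that every fiber of $f_i$ is a $J$-holomorphic curve. If $E$ were a symplectic $(-1)$-sphere disjoint from a regular fiber $F$, then $[E]\cdot[F]=0$; after realizing the exceptional class $[E]$ by a $J$-holomorphic sphere (using Taubes--Seiberg--Witten theory and positivity of intersections) and invoking positivity of intersections against the $J$-holomorphic fibers, the image $f_i(E)\subset\S^2$ must be a single point. Hence $E$ lies inside one fiber of $f_i$, which is impossible: a singular fiber contains no $(-1)$-sphere because $f_i$ is relatively minimal, and a regular fiber is a genus-$g$ surface with $g\geq 2$, not a sphere. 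Therefore no such $E$ exists, and Usher's dichotomy yields the minimality of $X_1\sharp_\psi X_2$.

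The main obstacle is this last step: upgrading a merely smoothly embedded $(-1)$-sphere that is homologically orthogonal to the fiber class into an honestly $J$-holomorphic sphere contained in a single fiber is not formal, and it is precisely where gauge theory (Taubes' constraints on symplectic $(-1)$-classes and Li--Liu's analysis of exceptional classes) is needed; this is the technical core of both Usher's and Baykur's treatments. Baykur's alternative proof in~\cite{baykur1} bypasses Usher's case list, arguing directly with Seiberg--Witten invariants and the adjunction and light-cone inequalities governed by the genus-$g$, square-$0$ symplectic fiber class (handling the cases $b^+>1$ and $b^+=1$ separately), together with the remark that a nontrivial genus-$\geq 2$ fiber sum over $\S^2$ is neither rational nor ruled, as one sees by comparing Euler characteristics and applying the adjunction formula to the fiber class.
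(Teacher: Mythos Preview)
The paper does not prove this theorem at all: it is quoted verbatim from \cite{Usher,baykur1} and used as a black box, so there is no ``paper's own proof'' to compare against. What you have written is not a comparison target but an attempted summary of the arguments in those two references.

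As such a summary, your outline is broadly faithful---Gompf's symplectic structure, identification of the Lefschetz fiber sum with the symplectic sum, and then Usher's minimality criterion, with the gauge-theoretic step (producing a $J$-holomorphic representative of an exceptional class for a fibration-compatible $J$) correctly flagged as the substantive input. One point, however, is inaccurate. Usher's non-minimal alternatives are not exhausted by low-genus phenomena: besides the case ``$X_i\setminus\nu\Sigma$ contains a symplectic $(-1)$-sphere'', the remaining alternative is that one summand is an $S^2$-bundle over a surface with $\Sigma$ a section, and that surface can have \emph{any} genus, in particular genus $g\geq 2$. So your sentence ``every configuration on that list \ldots\ forces $g(\Sigma)\leq 1$'' is wrong. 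The correct way to dispose of the ruled case here is arithmetic: a nontrivial relatively minimal genus-$g$ Lefschetz fibration over $\S^2$ has $e=4-4g+n$ with $n\geq 1$, whereas an $S^2$-bundle over $\Sigma_g$ has $e=4-4g$, so $X_i$ cannot be such a bundle. With that correction your reduction to Usher goes through, and the rest of your sketch (including the description of Baykur's alternative Seiberg--Witten argument) is a reasonable gloss on the cited literature.
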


A Lefschetz fibration $f$ is called \textit{holomorphic} if $X$ is a complex surface and  $f$ is a holomorphic map for a suitable complex structure on $\S^2$. 

The Euler characteristic of a Lefschetz fibration  $f: X\rightarrow \S^2$ can be computed as 
$e(X)=4-4g+n$, where $n$ is the number of singular fibers of $f$. Another invariant of $f$ is the  
 signature $\sigma(X)$ of $X$. There are various techniques to compute the signature: 
 see~\cite{Ozbagci,Smith,Endo,en,Matsumoto1}.
Endo and Nagami \cite{en} showed that the signature of a Lefschetz fibration 
can be calculated by using the signatures of relations contained in its monodromy. We use this method to calculate the signatures of  Lefschetz fibrations we construct.

Let $K_{f}^2=c_1^2(X)+8(g-1)$ and $\chi_{f}=\chi_h(X)+(g-1)$, where $c_1^2(X)$ is the first Chern
number and  $\chi_h(X)=(\sigma(X)+e(X))/4$ is the holomorphic Euler characteristic of $X$.
The \textit{slope} $\lambda_{f}$ of  $f$ is defined as the quotient
\[
\lambda_{f}=\displaystyle \frac{K_{f}^2}{\chi_{f}}.
\]
It follows from Lemma~$3.2$ in~\cite{Stipsicz99} that $K_{f}^2\geq 4g-4$.  Since the
$4$-manifold $X$ is symplectic~\cite{GompfStipsicz}, $b_2^+(X)\geq 1$.  The standard handle decomposition of $X$
has $n+2$ two-handles. By Corollary~$1.3$ in~\cite{Smith}, at least one of the vanishing cycles
is nonseparating. Thus, $1\leq b_2^+(X)+b_2^-(X)\leq n+1$. It follows now that
$\sigma(X)=b_2^+(X)-b_2^-(X) \geq 1-n$, i.e., $\sigma(X)+n \geq 1$. From the equality
\begin{eqnarray*}
\chi_h =\frac{\sigma(X)+e(X)}{4}=\frac{\sigma(X)+4-4g+n}{4}=\frac{\sigma(X)+n}{4}+1-g,
\end{eqnarray*}
we conclude that $\chi_f \geq 1$.

\subsection{Signatures of relations}
The signature of a Lefschetz fibration can be computed from the signatures of the relations involved 
in its monodromy.

\begin{theorem} 
[{\cite[Theorem 4.2]{en}}]
\label{c}
Let $f:X \rightarrow \S^2$ be a Lefschetz fibration of genus $g$ with the monodromy 
$A_1A_2 \cdots A_n=1.$ Then the signature of $X$ is 
\[
\sigma(X)=I_g(A_1 A_2 \cdots A_n).
\]
\end{theorem}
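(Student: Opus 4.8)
The statement is the Endo--Nagami signature formula, and the plan is to compute $\sigma(X)$ by cutting $X$ into pieces over a Hurwitz decomposition of the base, applying Novikov additivity, and recognizing the resulting contributions as the two parts of $I_g$: a sum of values of Meyer's signature cocycle coming from the "smooth" part of the fibration, and a local correction coming from the separating vanishing cycles.

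First I would fix an ordered Hurwitz system for $f$: pairwise disjoint closed disks $\Delta_1,\ldots,\Delta_n\subset\S^2$, one around each critical value, a reference fiber identified with $\Sigma_g$, and a system of arcs, chosen so that the monodromy of $f$ around $\partial\Delta_i$ is the Dehn twist $A_i$ in the prescribed order, and so that $P:=\S^2\setminus\bigsqcup_i\mathrm{int}\,\Delta_i$ is an $n$-holed sphere. Set $N_i=f^{-1}(\Delta_i)$ and $W=f^{-1}(P)$, so that $X=W\cup N_1\cup\cdots\cup N_n$, the pieces being glued along the closed $3$-manifolds $f^{-1}(\partial\Delta_i)$, each a $\Sigma_g$-bundle over $S^1$ with monodromy $A_i$. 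Since each of these $3$-manifolds is an \emph{entire} boundary component of the pieces it separates, building $X$ from $W$ by successively attaching the $N_i$ uses only Novikov additivity of the signature, with no Wall non-additivity correction, and hence
\[
\sigma(X)=\sigma(W)+\sum_{i=1}^{n}\sigma(N_i).
\]

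Next I would identify the two summands. The restriction $f|_W\colon W\to P$ is an honest $\Sigma_g$-bundle whose monodromy $\pi_1(P)\to\mod(\Sigma_g)$ sends the $i$-th boundary loop to $A_i$, the relation $A_1A_2\cdots A_n=1$ being exactly the relation among the boundary loops in $\pi_1(P)$. By Meyer's theorem the signature of a surface bundle depends only on the induced symplectic representation on $H_1(\Sigma_g;\Z)$, and over a planar base it is computed by summing Meyer's signature $2$-cocycle $\tau_g$ over a triangulation of $P$; telescoping the cocycle identity along the $n$-holed sphere yields
\[
\sigma(W)=-\sum_{i=1}^{n-1}\tau_g\bigl(\overline{A_1\cdots A_i},\ \overline{A_{i+1}}\bigr),
\]
where $\overline{(\,\cdot\,)}$ denotes the symplectic representation; this is the Meyer cocycle part of $I_g$. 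For the local pieces, each $N_i$ is diffeomorphic to $\Sigma_g\times D^2$ with a $2$-handle attached along the vanishing cycle $c_i$ lying in a fiber, with framing $-1$ relative to the fiber framing, i.e.\ to a regular neighborhood of a nodal fiber. A short homology computation then gives: if $c_i$ is nonseparating, $H_2(N_i)\cong\Z$ is generated by a regular fiber, which has square $0$, so $\sigma(N_i)=0$; if $c_i$ separates $\Sigma_g$, then $H_2(N_i)\cong\Z^2$ is generated by the two lobes of the nodal fiber, each of self-intersection $-1$ and meeting the other transversely once, so the intersection form is negative semidefinite of rank two and $\sigma(N_i)=-1$. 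Thus $\sum_i\sigma(N_i)$ is exactly the local correction term of $I_g$, and combining the two displays gives $\sigma(X)=I_g(A_1A_2\cdots A_n)$.

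The main obstacle is the appeal to Meyer's cocycle in the second display: one has to set up the triangulation of $P$, the basepoint, and the orientations so that the telescoped sum of $\tau_g$ comes out with precisely the sign and normalization used to define $I_g$ in \cite{en}, and one must check that these orientation choices are consistent with the Novikov-additivity decomposition. The remaining points---that all the gluings really are along \emph{closed} fibered $3$-manifolds, so that no Wall non-additivity terms arise, and the two homology computations for $N_i$ (in particular pinning down the $-1$ self-intersection of a lobe from the $-1$ attaching framing of the Lefschetz handle)---are routine. One could alternatively follow \cite{Ozbagci} and cut $X$ into neighborhoods of all singular fibers together with trivially fibered pieces over disks; then the Wall corrections do appear explicitly, but they reassemble into exactly the same Meyer cocycle sum.
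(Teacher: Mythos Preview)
The paper does not give its own proof of this statement: it is quoted verbatim from \cite[Theorem~4.2]{en} and used as a black box, with the remark that the definition and properties of $I_g$ ``can be found in~\cite{en}.'' So there is nothing in the present paper to compare your argument against.

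That said, what you have written is essentially the proof in \cite{en} (which in turn rests on Meyer's work and on the local computations carried out by Matsumoto, Endo and Ozbagci). Your decomposition $X=W\cup N_1\cup\cdots\cup N_n$, the use of Novikov additivity along the closed mapping tori $f^{-1}(\partial\Delta_i)$, the identification of $\sigma(W)$ with a telescoped sum of Meyer's cocycle $\tau_g$, and the dichotomy $\sigma(N_i)=0$ or $-1$ according to whether the vanishing cycle is nonseparating or separating, are exactly the ingredients Endo and Nagami assemble to \emph{define} $I_g$ and then to prove the formula. Your caveat about matching the sign and normalization conventions to those of \cite{en} is the only genuine bookkeeping issue, and you have flagged it correctly. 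One small wording quibble: in the separating case the $2\times 2$ intersection form you describe has rank one (one null eigenvalue), not two; but its signature is indeed $-1$, so the conclusion stands.
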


Here, $I_g$ is an integer-valued  function on the set of relators of the mapping class group of $\Sigma_g$, 
whose definition and properties can be found in~\cite{en}.

Let $a_1,a_2, \ldots, a_n$ and $b_1,b_2, \ldots, b_m$ be simple closed curves on $\Sigma_g$.
Suppose that the Dehn twists about them  satisfy the relation
\[
 A_1 A_2 \cdots A_n=B_1 B_2 \cdots B_m.
\]
For a positive relator $W=U \cdot A_1 A_2 \cdots A_n \cdot V$, a new positive relator
\[
W'=U \cdot B_1 B_2 \cdots B_m \cdot V
\]
is obtained by replacing the product $A_1 A_2 \cdots A_n$ in $W$ with $B_1 B_2 \cdots B_m$.  
If $R=B_1 B_2 \cdots B_m A_{n}^{-1} \cdots A_{2}^{-1} A_{1}^{-1}$, we say that $W'$ 
is obtained from $W$ by an \textit{R-substitution}. In this case, 
we also say that the Lefschetz fibration with monodromy $W'$ is obtained from 
the Lefschetz fibration with monodromy $W$.

\begin{theorem}
[{\cite[Theorem 4.3]{en}}]
\label{a}
Let $f:X \rightarrow \S^2$ and $f':X'\rightarrow \S^2$ be two genus-$g$ Lefschetz fibrations.
Suppose that  $f'$ is obtained from $f$ by an $R$-substitution. Then the
 signatures of $X$ and $X'$ satisfy the equality
\[
 \sigma(X')=\sigma(X)+I_g(R).
\]
\end{theorem}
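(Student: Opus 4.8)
Since the statement is recorded as \cite[Theorem~4.3]{en}, I describe how I would reconstruct its proof from Theorem~\ref{c} together with the basic algebraic properties of the function $I_g$. Write the monodromy of $f$ as $W=U\cdot A_1A_2\cdots A_n\cdot V$ and that of $f'$ as $W'=U\cdot B_1B_2\cdots B_m\cdot V$, where $U$, $V$ and all $A_i$, $B_j$ are positive Dehn twists and $A_1A_2\cdots A_n=B_1B_2\cdots B_m$ in $\mod(\Sigma_g)$, so that $R=B_1\cdots B_mA_n^{-1}\cdots A_1^{-1}$ is a relator. Both $W$ and $W'$ are \emph{positive} relators, hence Theorem~\ref{c} applies to each and gives $\sigma(X)=I_g(W)$ and $\sigma(X')=I_g(W')$. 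Therefore it suffices to establish the purely algebraic identity $I_g(W')=I_g(W)+I_g(R)$.

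For this, put $A=A_1\cdots A_n$ and $B=B_1\cdots B_m$, so $R=BA^{-1}$ is a word in positive and negative Dehn twists representing the identity. Then, after free reduction of the word on the right,
\[
W'=UBV=\bigl(UBA^{-1}U^{-1}\bigr)\bigl(UAV\bigr)=R^{U}\cdot W ,
\]
so $W'$ is the concatenation of the relator $R^{U}$, the conjugate of $R$ by the word $U$, with the relator $W$. At this point I would invoke the two properties of $I_g$ proved in \cite{en}: it is additive under concatenation of relators, $I_g(vw)=I_g(v)+I_g(w)$ (in particular it is unchanged by free reduction), and it is invariant under conjugating a relator by an arbitrary element of $\mod(\Sigma_g)$. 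Applying these to the displayed factorization yields $I_g(W')=I_g(R^{U})+I_g(W)=I_g(R)+I_g(W)$, and combining with $\sigma(X)=I_g(W)$, $\sigma(X')=I_g(W')$ gives $\sigma(X')=\sigma(X)+I_g(R)$.

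The real work — and the only genuine obstacle — is justifying those two properties of $I_g$, which is the heart of \cite{en} and is what one would cite here rather than reprove. In the cocycle description of $I_g$ via the Meyer signature cocycle $\tau_g$ on ${\rm Sp}(2g,\Z)$ plus local contributions of the singular fibers, concatenating relators $v,w$ introduces a single cross term of the form $\tau_g(\overline v,\cdot)=\tau_g(1,\cdot)$, which vanishes by normalization, while the local terms are visibly additive; conjugation invariance holds because the coboundary terms created by conjugating telescope around the closed word. Geometrically this is Novikov additivity: cut $X$ along $f^{-1}(\gamma)$ for a circle $\gamma\subset\S^2$ bounding a disk whose critical values carry the sub-word $A_1\cdots A_n$, obtaining $X=Z_A\cup_N X_0$ with $Z_A\to D^2$ the disk-fibration of monodromy $A$; replacing $Z_A$ by the disk-fibration $Z_B$ of monodromy $B$ is legitimate because $A=B$ in $\mod(\Sigma_g)$, so both have the same boundary mapping torus $N$, and the regluing produces $X'=Z_B\cup_N X_0$. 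Hence $\sigma(X')-\sigma(X)=\sigma(Z_B)-\sigma(Z_A)=\sigma\bigl(Z_B\cup_N\overline{Z_A}\bigr)$, and $Z_B\cup_N\overline{Z_A}$ is exactly the (achiral) Lefschetz fibration over $\S^2$ with monodromy $R$, whose signature is $I_g(R)$ by the achiral extension of Theorem~\ref{c} established in \cite{en}. Either route delivers the claimed formula.
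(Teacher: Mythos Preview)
The paper does not supply its own proof of this statement: Theorem~\ref{a} is quoted verbatim from \cite[Theorem~4.3]{en} and used as a black box, so there is no in-paper argument to compare your proposal against. Your reconstruction is faithful to the Endo--Nagami approach: reduce to the algebraic identity $I_g(W')=I_g(W)+I_g(R)$ via Theorem~\ref{c}, then invoke additivity of $I_g$ under concatenation of relators and its conjugation invariance; the parallel Novikov-additivity picture you sketch is indeed how those properties are established geometrically in \cite{en}. One small point worth making explicit in the algebraic route is that $I_g$ is well defined on relators up to insertion/deletion of cancelling pairs $CC^{-1}$, which is what licenses the free reduction you perform when passing from $R^{U}\cdot W$ to $W'$; this is part of Endo--Nagami's setup (their $I_g$ factors through the kernel of the presentation map on the free group), so citing that along with additivity and conjugation invariance is all that is needed.
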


The signatures of the relations we need are given below. For the proof, see~{\cite[Section 3] {en}}.

\begin{itemize}
\item \textbf{Lantern relator:}
 The signature of the lantern relator~\eqref{eqn:lanternrelator}
is
\[
I_g( \mathscr{L})=-1.
\]
\item \textbf{Odd chain relator}: The signature of the odd chain relator~\eqref{eqn:oddrelator}
is  
\[
I_g(\mathscr{C}_{2h+1})=-2h(h+2).
\]  

\item \textbf{Hyperelliptic relator}:  The signature of the hyperelliptic relator~\eqref{eqn:hyellrelator}
is 
\[
I_g(h_g)=-4(g+1).
\] 
\end{itemize}

We note that if $C$ and $D$ commute (resp. satisfy the braid relation), 
then the signature of the relator $CDC^{-1}D^{-1}$ (resp. $CDCD^{-1}C^{-1}D^{-1}$) 
  is $0$. Also, the signature of the even chain relator~\eqref{eqn:evenrelator} 
 is $I_g(\mathscr{C}_{2h})=-4h(h+1)+1$.


\section{Fiber sum,  substitutions and slope}

In this section, we determine the slope of the fiber sum of two Lefschetz fibrations. We also 
investigate how the slope changes under Lantern substitutions  
and $\mathscr{C}_{2h+1}$-substitutions. We remind that all Lefschetz fibrations we consider are nontrivial 
and relatively minimal, i.e. there is at least one singular fiber and no fiber contains a $(-1)$-sphere.

\begin{lemma} \label{lem:lemma111} 
Let $g\geq 2$ and 
let $f_1:X_1\rightarrow \S^2$ and $f_2:X_2\rightarrow \S^2$ be two 
 Lefschetz fibrations of genus $g$. For any fiber sum 
$f:X_1\sharp X_2 \rightarrow \S^2 $ of $f_1$ and $f_2$, we have
\begin{itemize}
\item[$(i)$]  $K_{f}^2 =K_{f_{1}}^2+K_{f_{2}}^2$ and 
\item[$(ii)$]  $\chi_{f} =\chi_{f_{1}}+\chi_{f_{2}}$.
\end{itemize}
In particular, $\lambda_{f}=(K_{f_{1}}^2+K_{f_{2}}^2)/(\chi_{f_{1}}+\chi_{f_{2}})$.
\end{lemma}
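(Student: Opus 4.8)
The plan is to reduce both identities to the behavior of the Euler characteristic $e$ and the signature $\sigma$ under fiber sum. By the definition of $\chi_h$ we have $\chi_f=\tfrac14\bigl(\sigma(X)+e(X)\bigr)+(g-1)$, and since $g\ge 2$ the total space $X$ is symplectic by Gompf's theorem, hence almost complex, so the signature theorem gives $c_1^2(X)=3\sigma(X)+2e(X)$ and therefore $K_f^2=3\sigma(X)+2e(X)+8(g-1)$; the same two identities hold for $f_1$ and $f_2$. I would also note in passing that a fiber sum of nontrivial, relatively minimal Lefschetz fibrations is again nontrivial and relatively minimal, so that $\lambda_f$ is indeed defined. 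It will then be enough to establish
\[
e(X_1\sharp X_2)=e(X_1)+e(X_2)+4g-4\qquad\text{and}\qquad \sigma(X_1\sharp X_2)=\sigma(X_1)+\sigma(X_2).
\]

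For the first of these I would simply count singular fibers: writing $n_i$ for the number of singular fibers of $f_i$, the formula recalled in Section~\ref{sec:sec2-2} gives $e(X_i)=4-4g+n_i$, and since the fiber sum is formed by gluing along neighborhoods of regular fibers its singular fibers are exactly those of $f_1$ together with those of $f_2$, so it has $n_1+n_2$ of them and $e(X_1\sharp X_2)=4-4g+n_1+n_2=e(X_1)+e(X_2)+(4g-4)$. Alternatively one can use $X_1\sharp X_2=(X_1\setminus\nu\Sigma_g)\cup_\psi(X_2\setminus\nu\Sigma_g)$, the pieces being glued along $\Sigma_g\times\S^1$, which has Euler characteristic $0$, together with $e(\nu\Sigma_g)=e(\Sigma_g)=2-2g$.

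For the signature I would invoke Novikov additivity. The neighborhood $\nu\Sigma_g$ is a trivial disk bundle $\Sigma_g\times D^2$, whose intersection form is carried by the fiber class of self-intersection $0$, so $\sigma(\nu\Sigma_g)=0$; hence $\sigma(X_i)=\sigma(X_i\setminus\nu\Sigma_g)+\sigma(\nu\Sigma_g)=\sigma(X_i\setminus\nu\Sigma_g)$, and applying additivity once more along the common boundary $\Sigma_g\times\S^1$ gives $\sigma(X_1\sharp X_2)=\sigma(X_1\setminus\nu\Sigma_g)+\sigma(X_2\setminus\nu\Sigma_g)=\sigma(X_1)+\sigma(X_2)$. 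One could instead argue inside the monodromy formalism via Theorem~\ref{c}: the signature equals $I_g$ of the monodromy, which for the fiber sum is $W_1W_2^{\psi}$, and $I_g$ is additive under concatenation and invariant under global conjugation.

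Finally I would assemble the pieces. For $(ii)$,
\[
\chi_h(X_1\sharp X_2)=\tfrac14\bigl(\sigma(X_1)+\sigma(X_2)+e(X_1)+e(X_2)+4g-4\bigr)=\chi_h(X_1)+\chi_h(X_2)+(g-1),
\]
and adding $g-1$ to both sides gives $\chi_f=\chi_{f_1}+\chi_{f_2}$. For $(i)$ I would substitute the two additivity formulas into $K_f^2=3\sigma(X)+2e(X)+8(g-1)$ and regroup the genus terms via $2(4g-4)+8(g-1)=8(g-1)+8(g-1)$, obtaining $K_f^2=K_{f_1}^2+K_{f_2}^2$; the closing formula for $\lambda_f$ is then immediate. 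None of this is genuinely difficult: the only points I expect to need a sentence of care are the two vanishing checks, $e(\Sigma_g\times\S^1)=0$ and $\sigma(\Sigma_g\times D^2)=0$, and the appeal to the symplectic (hence almost complex) structure — available because $g\ge 2$ — to write $c_1^2=3\sigma+2e$.
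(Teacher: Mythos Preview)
Your proof is correct and follows essentially the same route as the paper: reduce $(i)$ and $(ii)$ to the additivity formulas $e(X_1\sharp X_2)=e(X_1)+e(X_2)+4(g-1)$ and $\sigma(X_1\sharp X_2)=\sigma(X_1)+\sigma(X_2)$, then substitute into $K_f^2=3\sigma+2e+8(g-1)$ and $\chi_f=\tfrac14(\sigma+e)+(g-1)$. The paper simply asserts these two additivity facts without further comment, whereas you supply the supporting arguments (singular-fiber count, Novikov additivity, the almost complex justification of $c_1^2=3\sigma+2e$); this extra care is fine but not a different approach.
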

   
\begin{proof}
The Euler characteristic and the signature of the total space of $f$  are
	\begin{align*}
		e(X_1\sharp X_2)&=e(X_1)+e(X_2)+4(g-1) \mbox{ and}\\
		\sigma(X_1\sharp X_2)&=\sigma(X_1)+\sigma(X_2),
	\end{align*}
respectively. From these two equalities, we compute
	\begin{eqnarray*}
		K_{f}^2 &=&c_1^2(X_1 \sharp X_2)+8(g-1)\\
		&=& 3\sigma(X_1 \sharp X_2)+2 e(X_1 \sharp X_2)+8(g-1)\\
		&=& 3\sigma(X_1)+2e(X_1)+8(g-1)+
		    3\sigma(X_2)+2e(X_2)+8(g-1)\\
		&=& K_{f_{1}}^2+K_{f_{2}}^2,
	\end{eqnarray*}
and
	\begin{eqnarray*}
		\chi_{f} &=& \displaystyle \frac{\sigma(X_1 \sharp X_2)+e(X_1 \sharp X_2)}{4}+g-1\\
		&=& \displaystyle \frac{\sigma(X_1)+\sigma(X_2)+e(X_1)+e(X_2)+4(g-1)}{4}+g-1\\
		&=& \chi_{f_{1}}+\chi_{f_{2}}.
	\end{eqnarray*}
This proves the lemma.
\end{proof}

\begin{corollary} \label{cor:corol111}
Let $g\geq2$ and  let $f_1:X_1\rightarrow \S^2$ and $f_2:X_2\rightarrow \S^2$ be two 
Lefschetz fibrations of genus $g$ having the same slope $\lambda$. Then the slope of any 
fiber sum of $f_1$ and $f_2$ is also $\lambda$.
\end{corollary}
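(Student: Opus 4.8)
The plan is to derive Corollary~\ref{cor:corol111} directly from Lemma~\ref{lem:lemma111} by a one-line computation with fractions, so there is essentially no obstacle to speak of; the only care needed is to record why the denominators behave.

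First I would invoke Lemma~\ref{lem:lemma111}: for any fiber sum $f:X_1\sharp X_2\to\S^2$ of $f_1$ and $f_2$ we have $K_f^2=K_{f_1}^2+K_{f_2}^2$ and $\chi_f=\chi_{f_1}+\chi_{f_2}$, hence
\[
\lambda_f=\frac{K_{f_1}^2+K_{f_2}^2}{\chi_{f_1}+\chi_{f_2}}.
\]
Next I would use the hypothesis that $f_1$ and $f_2$ have the same slope $\lambda$, i.e. $K_{f_1}^2=\lambda\,\chi_{f_1}$ and $K_{f_2}^2=\lambda\,\chi_{f_2}$. Substituting these into the displayed formula gives
\[
\lambda_f=\frac{\lambda\,\chi_{f_1}+\lambda\,\chi_{f_2}}{\chi_{f_1}+\chi_{f_2}}=\lambda\cdot\frac{\chi_{f_1}+\chi_{f_2}}{\chi_{f_1}+\chi_{f_2}}=\lambda,
\]
which is the assertion.

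The one point worth a sentence of justification is that the division is legitimate: by the discussion at the end of Subsection~\ref{sec:sec2-2} (the paragraph showing $\chi_f\geq 1$ for any nontrivial relatively minimal Lefschetz fibration of genus $g\geq 2$), we have $\chi_{f_1}\geq 1$ and $\chi_{f_2}\geq 1$, so $\chi_{f_1}+\chi_{f_2}>0$ and both $\lambda=K_{f_i}^2/\chi_{f_i}$ and $\lambda_f=K_f^2/\chi_f$ are well-defined. Since the computation is purely formal, there is no genuine difficulty; the "main obstacle," such as it is, is merely making sure the equality $K_{f_i}^2=\lambda\chi_{f_i}$ is read off correctly from the definition of slope and that we are entitled to clear denominators, both of which are immediate.
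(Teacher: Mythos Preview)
Your proof is correct and is exactly the intended argument: the paper states the corollary without proof, as it follows immediately from Lemma~\ref{lem:lemma111} by the fraction computation you wrote out. Your extra remark that $\chi_{f_i}\geq 1$ (from the end of Subsection~\ref{sec:sec2-2}) to justify the division is a nice touch and fully in line with the paper.
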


\begin{lemma} \label{lem:lemma9234} 
Let $g\geq 3$. Suppose that a genus-$g$ Lefschetz fibration $f_2:X_2\rightarrow \S^2$ is obtained from  
$f_1:X_1\rightarrow \S^2$ by a lantern substitution. Then $\lambda_{f_2}<\lambda_{f_1}$.
\end{lemma}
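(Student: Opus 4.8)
The plan is a short bookkeeping argument: I would record how the number of singular fibers, the signature $\sigma(X)$, and the Euler characteristic $e(X)$ change under the substitution, and then read off the effect on $K_f^2$ and $\chi_f$. Recall that a lantern substitution replaces three Dehn twists $XYZ$ occurring in the monodromy of $f_1$ by the four Dehn twists $A_1A_2A_3A_4$ of a lantern relation $A_1A_2A_3A_4=XYZ$; thus the number of singular fibers increases by one, and since $e(X)=4-4g+n$ this gives $e(X_2)=e(X_1)+1$. Next I would apply Theorem~\ref{a}: the substitution is an $R$-substitution in which $R$ is a cyclic conjugate of the lantern relator $\mathscr{L}$ of~\eqref{eqn:lanternrelator}, so $I_g(R)=I_g(\mathscr{L})=-1$, and hence $\sigma(X_2)=\sigma(X_1)-1$.

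Combining these two changes with the identity $c_1^2(X)=2e(X)+3\sigma(X)$, one obtains
\[
K_{f_2}^2=c_1^2(X_2)+8(g-1)=K_{f_1}^2+2-3=K_{f_1}^2-1,\qquad
\chi_{f_2}=\frac{\sigma(X_2)+e(X_2)}{4}+(g-1)=\chi_{f_1};
\]
in words, a lantern substitution leaves $\chi_f$ unchanged and lowers $K_f^2$ by exactly one. Finally, since $f_1$ is a nontrivial, relatively minimal genus-$g$ Lefschetz fibration with $g\geq 3$, one has $\chi_{f_1}\geq 1>0$ (see Subsection~\ref{sec:sec2-2}), whence
\[
\lambda_{f_2}=\frac{K_{f_1}^2-1}{\chi_{f_1}}=\lambda_{f_1}-\frac{1}{\chi_{f_1}}<\lambda_{f_1}.
\]

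There is no genuine obstacle in this argument; the only point that demands care is pinning down the direction of the substitution and, with it, the sign of $I_g(\mathscr{L})$. Conceptually, a lantern substitution is the monodromy counterpart of blowing up the total space: it preserves the holomorphic Euler characteristic while decreasing $c_1^2$ by one, and the positivity of $\chi_f$ then forces the slope strictly downward.
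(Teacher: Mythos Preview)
Your proof is correct and follows essentially the same approach as the paper's: both track that a lantern substitution increases the number of singular fibers by one and, via Theorem~\ref{a} with $I_g(\mathscr{L})=-1$, decreases the signature by one, yielding $K_{f_2}^2=K_{f_1}^2-1$ and $\chi_{f_2}=\chi_{f_1}$. You are simply a bit more explicit than the paper in invoking $\chi_{f_1}\geq 1$ from Subsection~\ref{sec:sec2-2} to justify the final strict inequality.
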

\begin{proof} Let $n_1$ and $n_2$ be the numbers of singular fibers of $f_1$ and $f_2$, respectively.
Since $f_2$ is obtained from $f_1$ by a lantern substitution 
\[
\mathscr{L}=Z^{-1}Y^{-1}X^{-1}A_1A_2A_3A_4,
\]
the signatures of $f_1$ and $f_2$ satisfies 
\[
\sigma(X_2)= \sigma(X_1)+I_g(\mathscr{L})=\sigma(X_1)-1,
\]
and $n_2=n_1+1$, so that
\[
e(X_2)  = 4-4g+n_2=4-4g+n_1+1=e(X_1)+1.
\]
Thus, 
\[
K_{f_2}^2 =K_{f_{1}}^2-1 \mbox{ and }  \chi_{f_2} =\chi_{f_{1}}.
\]

The lemma now follows.
\end{proof}

\begin{lemma} \label{lem:lemma131} 
Let $g\geq 3$. Suppose that a genus-$g$ Lefschetz fibration $f_2:X_2\rightarrow \S^2$ is obtained from  
$f_1:X_1\rightarrow \S^2$ by a $\mathscr{C}_{2h+1}$-substitution. Then 
\begin{itemize}
\item[$(i)$]  $K_{f_2}^2 =K_{f_{1}}^2+ 2h^2$ and 
\item[$(ii)$]  $\chi_{f_2} =\chi_{f_{1}}+\frac{1}2 (h^2+h)$.
\end{itemize}
\end{lemma}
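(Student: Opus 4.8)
The plan is to mimic the proof of Lemma~\ref{lem:lemma9234}: track how the Euler characteristic and the signature of the total space change under the substitution, and then feed these into the definitions of $K_f^2$ and $\chi_f$.

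First I would count singular fibers. A $\mathscr{C}_{2h+1}$-substitution replaces the two-letter subword $DE$ in the monodromy of $f_1$ by the word $(A_1A_2\cdots A_{2h+1})^{2h+2}$, which has $(2h+1)(2h+2)$ letters. Hence if $n_1,n_2$ denote the numbers of singular fibers of $f_1,f_2$, then $n_2=n_1+(2h+1)(2h+2)-2=n_1+4h^2+6h$, and since $e(X)=4-4g+n$ for a genus-$g$ Lefschetz fibration with $n$ singular fibers, we get $e(X_2)=e(X_1)+4h^2+6h$.

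Next I would compute the signature via Theorem~\ref{a}. Since the curves $d$ and $e$ are disjoint, $D$ and $E$ commute, so $D^{-1}E^{-1}=E^{-1}D^{-1}$ and the relator $R$ attached to the substitution is precisely the odd chain relator $\mathscr{C}_{2h+1}$ (up to a cyclic permutation, which leaves $I_g$ unchanged). Therefore $\sigma(X_2)=\sigma(X_1)+I_g(\mathscr{C}_{2h+1})=\sigma(X_1)-2h(h+2)=\sigma(X_1)-2h^2-4h$. Finally, as in Lemma~\ref{lem:lemma111} one has $c_1^2(X)=3\sigma(X)+2e(X)$, so $K_f^2=3\sigma(X)+2e(X)+8(g-1)$ and $\chi_f=\tfrac14(\sigma(X)+e(X))+g-1$. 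Plugging in the two displayed changes yields $K_{f_2}^2-K_{f_1}^2=3(-2h^2-4h)+2(4h^2+6h)=2h^2$ and $\chi_{f_2}-\chi_{f_1}=\tfrac14\big((-2h^2-4h)+(4h^2+6h)\big)=\tfrac14(2h^2+2h)=\tfrac12(h^2+h)$, which is exactly the assertion.

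The argument is essentially bookkeeping, so there is no serious obstacle; the only point that deserves a moment of care is the identification of the substitution relator $R$ with $\mathscr{C}_{2h+1}$, so that the tabulated value $I_g(\mathscr{C}_{2h+1})=-2h(h+2)$ applies verbatim, together with keeping the arithmetic in $h$ straight.
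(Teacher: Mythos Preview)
Your argument is correct and follows exactly the same route as the paper: count the change in singular fibers to get $e(X_2)=e(X_1)+4h^2+6h$, apply Theorem~\ref{a} with $I_g(\mathscr{C}_{2h+1})=-2h(h+2)$ to get $\sigma(X_2)=\sigma(X_1)-2h^2-4h$, and then read off $K_{f_2}^2$ and $\chi_{f_2}$. The only difference is that you spell out the final arithmetic for $K_f^2$ and $\chi_f$, whereas the paper simply says ``the lemma follows from these two equalities.''
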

\begin{proof}
Let $n_1$ and $n_2$ be the number of singular fibers of $f_1$ and $f_2$, respectively. Since $f_2$
is obtained from $f_1$ by a $\mathscr{C}_{2h+1}$-substitution, 
\[
n_2=n_1-2+(2h+1)(2h+2)= n_1+4h^2+6h.
\]
Therefore,
\[
e(X_2)  = 4-4g+n_2=e(X_1)+4h^2+6h
\]
and 
\[
\sigma(X_2)= \sigma(X_1)+I_g(\mathscr{C}_{2h+1})=\sigma(X_1)-2h^2-4h.
\]

The lemma follows from these two equalities.
\end{proof}

\begin{corollary} \label{cor:corol122} 
Let $g\geq 3$. Suppose that a genus-$g$ Lefschetz fibration $f:X\rightarrow \S^2$ is obtained from 
a fiber sum of 
the Lefschetz fibrations $f_1:X_1\rightarrow \S^2$ and $f_2:X_2\rightarrow \S^2$ followed by 
$\mathscr{C}_{2h+1}$-substitutions applied $r$ times. Then 
\begin{itemize}
\item[$(i)$]  $K_{f}^2 =K_{f_{1}}^2+K_{f_{2}}^2+ 2rh^2$, and 
\item[$(ii)$]  $\chi_{f} =\chi_{f_{1}}+\chi_{f_{2}}+\frac{1}2 r (h^2+h)$.
\end{itemize}
\end{corollary}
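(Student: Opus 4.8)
The plan is to prove this by combining Lemma~\ref{lem:lemma111} (which handles the fiber-sum step) with Lemma~\ref{lem:lemma131} (which handles a single $\mathscr{C}_{2h+1}$-substitution), and then inducting on the number $r$ of substitutions. There is essentially no new content: the statement is a bookkeeping consequence of the two preceding lemmas.

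First I would name the intermediate Lefschetz fibrations. Let $f^{(0)}$ denote the given fiber sum of $f_1$ and $f_2$, and for $1\le j\le r$ let $f^{(j)}$ be the genus-$g$ Lefschetz fibration obtained from $f^{(j-1)}$ by the $j$-th $\mathscr{C}_{2h+1}$-substitution, so that $f^{(r)}=f$. I would note that each $f^{(j)}$ is again a genuine (nontrivial, relatively minimal) genus-$g$ Lefschetz fibration: the fiber sum is one by construction, and an $R$-substitution replaces a positive factorization of the identity in $\mod(\Sigma_g)$ by another positive factorization of the identity, so the invariants $K^2$ and $\chi$ are defined at every stage and Lemma~\ref{lem:lemma131} may legitimately be applied at each step.

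Then by Lemma~\ref{lem:lemma111} we have $K_{f^{(0)}}^2 = K_{f_1}^2 + K_{f_2}^2$ and $\chi_{f^{(0)}} = \chi_{f_1} + \chi_{f_2}$, while by Lemma~\ref{lem:lemma131}, for each $j$ with $1\le j\le r$, $K_{f^{(j)}}^2 = K_{f^{(j-1)}}^2 + 2h^2$ and $\chi_{f^{(j)}} = \chi_{f^{(j-1)}} + \frac{1}{2}(h^2+h)$. A straightforward induction on $j$ gives $K_{f^{(j)}}^2 = K_{f^{(0)}}^2 + 2jh^2$ and $\chi_{f^{(j)}} = \chi_{f^{(0)}} + \frac{1}{2}j(h^2+h)$; taking $j=r$ and substituting the values of $K_{f^{(0)}}^2$ and $\chi_{f^{(0)}}$ yields parts $(i)$ and $(ii)$.

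The only point deserving a line of care is the one flagged above, namely verifying that every intermediate fibration $f^{(j)}$ is of the type for which Lemma~\ref{lem:lemma131} was established (genus $g\ge 3$, relatively minimal, with a well-defined positive monodromy factorization), so that the increments $2h^2$ and $\frac{1}{2}(h^2+h)$ are genuinely additive across the $r$ substitutions; this is exactly what makes the induction go through. Once that is observed the computation is immediate, and I do not expect any substantive obstacle.
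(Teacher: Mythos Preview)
Your proposal is correct and matches the paper's approach: the corollary is stated without proof, being an immediate consequence of Lemma~\ref{lem:lemma111} for the fiber-sum step and Lemma~\ref{lem:lemma131} applied $r$ times for the substitutions. Your induction on $j$ simply spells out what the paper leaves implicit.
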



\section{Main results} \label{sec:results}
In this section we prove our main theorems.  We construct a sequence
of Lefschetz fibrations of genus $g\geq 3$ such that their slopes converge to $\frac{4h}{h+1}$
for any $1\leq h\leq g-1$. Taking $h=1$ gives our main theorem. The Lefschetz fibrations can 
be chosen to be simply connected and minimal. We then show that the infimum and the supremum of the slopes 
of all Lefschetz fibrations are not realized as the slope.

\begin{theorem} \label{thm:thm124} 
Let $g\geq 3$, $r\geq 1$ and $1\leq h\leq g-1$ be integers and let $f_0:X_0\rightarrow \S^2$ 
be a  genus-$g$ Lefschetz fibration
with the monodromy $VC^{r},$ where $C$ is a positive Dehn twist about a nonseparating 
curve and $V$ is a product of positive Dehn twists.
For each integer $n\geq 1$, there is a genus-$g$ Lefschetz fibration 
$f_n:X_n\rightarrow \S^2$ such that
\begin{itemize}
\item[$(i)$]  $K_{f_n}^2 =2^n K_{f_{0}}^2+ 2^n r\left[ (h+1)^n-1 \right] h$, and 
\item[$(ii)$]  $\chi_{f_n} =2^n\chi_{f_{0}}+2^{n-2} r\left[ (h+1)^n-1 \right] (h+1)$.
\end{itemize}
In particular, the slope of $f_n$ is
\begin{eqnarray*}
\lambda_{f_n}=\frac{4K^2_{f_0} + 4r\left[ (h+1)^n-1 \right] h}
                                 {4\chi_{f_{0}}+ r\left[ (h+1)^n-1 \right] (h+1)},
\end{eqnarray*}
so that its limit is  $\frac{4h}{h+1}$.
\end{theorem}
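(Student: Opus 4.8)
The plan is to construct the fibrations $f_n$ inductively, using the two moves described in the introduction: a twisted fiber sum with a copy of itself, followed by a batch of $\mathscr{C}_{2h+1}$-substitutions. The starting observation is that if the monodromy of $f_0$ contains $C^r$ for a nonseparating twist $C$, then one can take a twisted fiber sum $f_0 \sharp_\psi f_0$ with a gluing diffeomorphism $\psi$ chosen so that the image of $C^r$ in the second copy becomes $D^r$, where $d$ is one boundary curve of a genus-$h$ subsurface whose other boundary curve $e$ is a vanishing cycle carried by the odd chain in the first copy; after a few Hurwitz moves one arranges the monodromy to contain $(DE)^r$. (Here I am assuming the structural facts about twisted fiber sums and Hurwitz moves from Section~\ref{sec:sec2-2}, together with the existence of the odd chain relator $\mathscr{C}_{2h+1}$ of \eqref{eqn:oddrelator}.) Then each of the $r$ factors $DE$ can be replaced, via a $\mathscr{C}_{2h+1}$-substitution, by a product $(A_1A_2\cdots A_{2h+1})^{2h+2}$ of $(2h+1)(2h+2)$ nonseparating twists. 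Crucially, the resulting fibration $f_1$ again contains a power of a nonseparating twist to a power at least $r(h+1)$ — in fact the chain power contributes $C_1$ (say) raised to the power $2h+2$ from each block, giving a new exponent $r' = r(h+1)$ — so the construction can be iterated. This gives $f_n$ for all $n$, with the bookkeeping that $f_{n}$ is obtained from $f_{n-1}\sharp f_{n-1}$ by $r_{n-1}$-many $\mathscr{C}_{2h+1}$-substitutions where $r_0 = r$ and $r_n = r_{n-1}(h+1) = r(h+1)^n$.

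Next I would run the numerics using Corollary~\ref{cor:corol122}. Applying it to $f_n$ viewed as $r_{n-1}$ substitutions on $f_{n-1}\sharp f_{n-1}$ gives the recursions
\begin{align*}
K_{f_n}^2 &= 2K_{f_{n-1}}^2 + 2r_{n-1}h^2 = 2K_{f_{n-1}}^2 + 2r(h+1)^{n-1}h^2,\\
\chi_{f_n} &= 2\chi_{f_{n-1}} + \tfrac12 r_{n-1}(h^2+h) = 2\chi_{f_{n-1}} + \tfrac12 r(h+1)^{n-1}h(h+1).
\end{align*}
Both are linear first-order recursions of the form $x_n = 2x_{n-1} + c\,q^{\,n-1}$ with $q = h+1 \geq 2$, whose solution is $x_n = 2^n x_0 + c\,(q^n - 2^n)/(q-2)$ when $q\neq 2$, and the limiting case $q=2$ (i.e. $h=1$) is handled by $x_n = 2^n x_0 + c\,n\,2^{n-1}$. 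A direct check shows that in both cases the closed forms collapse to $x_n = 2^n x_0 + c\,\bigl[(h+1)^n - 2^n\bigr]/(h-1)$ — interpreted in the limit as $c\,n\,2^{n-1}$ when $h=1$ — and substituting $c = 2rh^2$ for $K^2$ and $c = \tfrac12 rh(h+1)$ for $\chi$ and simplifying should yield exactly
\begin{align*}
K_{f_n}^2 &= 2^n K_{f_0}^2 + 2^n r\bigl[(h+1)^n - 1\bigr]h,\\
\chi_{f_n} &= 2^n \chi_{f_0} + 2^{n-2} r\bigl[(h+1)^n - 1\bigr](h+1),
\end{align*}
matching statements $(i)$ and $(ii)$. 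The most efficient way to present this is probably a clean induction directly on the claimed closed forms: assume $(i)$ and $(ii)$ for $n-1$, plug into the recursions, and simplify; this avoids having to solve the recursion from scratch and dodges the $h=1$ case split, since the claimed formulas are uniform in $h$.

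For the slope, I would just divide: $\lambda_{f_n} = K_{f_n}^2/\chi_{f_n}$, and multiplying numerator and denominator by $4/2^n$ gives
\[
\lambda_{f_n} = \frac{4K_{f_0}^2 + 4r\bigl[(h+1)^n - 1\bigr]h}{4\chi_{f_0} + r\bigl[(h+1)^n - 1\bigr](h+1)}.
\]
As $n\to\infty$ the term $r\bigl[(h+1)^n-1\bigr]$ dominates both numerator and denominator (the constants $4K_{f_0}^2$ and $4\chi_{f_0}$ are fixed, and $(h+1)^n\to\infty$ since $h+1\geq 2$), so $\lambda_{f_n} \to 4rh/(r(h+1)) = 4h/(h+1)$, independent of $f_0$.

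The main obstacle is not the arithmetic but the geometric/combinatorial step: showing that the twisted fiber sum can genuinely be arranged so that the monodromy contains $(DE)^r$ with $d, e$ cobounding a genus-$h$ subsurface sitting in the standard position of Figure~\ref{fig:thesurface}, and — equally important — that after the $\mathscr{C}_{2h+1}$-substitutions the new monodromy still contains a large power of some nonseparating twist so the induction can continue. This requires a careful choice of the gluing diffeomorphism $\psi$ (equivalently, an element of $\mod(\Sigma_g)$ carrying $c$ to $d$) and an explicit identification of the chain curves $a_1,\dots,a_{2h+1}$ with the curves $c_1,\dots,c_{2h+1}$ on the fixed surface, so that one of the chain twists reappears as the "$C$" for the next stage with exponent $r(h+1)$ coming from the exponent $2h+2$ in $\mathscr{C}_{2h+1}$ applied to $r$ blocks — which I would isolate as a separate lemma before invoking Corollary~\ref{cor:corol122}. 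Once that lemma is in place, everything above is a routine induction.
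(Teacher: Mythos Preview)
Your overall strategy---twisted fiber sum followed by $\mathscr{C}_{2h+1}$-substitutions, iterated---is exactly the paper's. But there is a genuine arithmetic slip that breaks the argument. You write that each block $(A_1\cdots A_{2h+1})^{2h+2}$ contributes $C_1$ ``to the power $2h+2$'', and then conclude $r' = r(h+1)$. The first clause is right, the second is not: $r$ blocks times $2h+2$ copies of $C_1$ gives $r(2h+2)=2r(h+1)$, so the correct recursion is $r_{i}=2(h+1)r_{i-1}$, i.e.\ $r_i = r\,(2(h+1))^i$. With your $r_{n-1}=r(h+1)^{n-1}$ the induction you propose on the closed form fails already at $n=2$: plugging the assumed formula for $K_{f_{n-1}}^2$ into $K_{f_n}^2 = 2K_{f_{n-1}}^2 + 2r(h+1)^{n-1}h^2$ and comparing with the target forces $2^n + 2h = 2^n(h+1)$, which holds only for $n=1$.

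The missing factor of $2$ is also the reason you are seeing a spurious $h=1$ case split. With the correct $r_i = r\,2^i(h+1)^i$, the sum $\sum_{i=0}^{n-1}2^{n-i}r_i\,h^2 = 2^n r h^2\sum_{i=0}^{n-1}(h+1)^i = 2^n r h\,[(h+1)^n-1]$ is a clean geometric series in $h+1$ with no degenerate ratio, and likewise for $\chi_{f_n}$. Your case analysis $q=2$ vs.\ $q\neq 2$ is a red flag, not a technicality to dodge.

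A smaller point: your description of the fiber sum is muddled---the phrase ``whose other boundary curve $e$ is a vanishing cycle carried by the odd chain in the first copy'' does not parse, since at stage $0$ the first copy has no chain in it yet. The clean way (and the paper's) is to conjugate \emph{both} copies: pick $\phi_1,\phi_2\in\mod(\Sigma_g)$ with $\phi_1(c)=d_{h+1}$ and $\phi_2(c)=e_{h+1}$, form $W_i^{\phi_1}W_i^{\phi_2}$, and slide the two powers together to get $(D_{h+1}E_{h+1})^{r_i}$. This also makes transparent why the chain curves $c_1,\dots,c_{2h+1}$ sit in standard position for the next iteration.
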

\begin{proof} Consider a regular fiber of $f_0$ and identify it with the surface $\Sigma_g$.
By a global conjugation of the monodromy, we may assume that $C=C_1$. Let us set $V_0=V$, $r_0=r$
and $W_0=V_0C_1^{r_0}$. Let $\phi_1$ and $\phi_2$ be two diffeomorphisms of the surface $\Sigma_g$ 
such that $\phi_1(c_1)=d_{h+1}$ and $\phi_2(c_1)=e_{h+1}$.  Note that the curves $d_{h+1}$ and $e_{h+1}$
are isotopic to the two boundary components of a regular neighborhood of $c_1\cup c_2\cup\cdots \cup c_{2h+1}$ 
on $\Sigma_g$, so that
\[
D_{h+1}E_{h+1}=(C_1C_2\cdots C_{2h+1})^{2(h+1)}.
\]

Suppose that for an integer $i\geq 0$, $f_i:X_i\rightarrow \S^2$ 
is a  Lefschetz fibration with a regular fiber $\Sigma_g$ 
and with the monodromy factorization $W_i=V_i C_1^{r_i}$,
where $V_i$ is a product of positive Dehn twists.  Then the product 
$W_i^{\phi_1} W_i^{\phi_2}$ may be written as 
\[
W_i^{\phi_1} W_i^{\phi_2}=V_i^{\phi_1}V_i^{D_{h+1}^{r_i}\phi_2} (D_{h+1}E_{h+1})^{r_i}.
\]

By trading each  $D_{h+1}E_{h+1}$ with $(C_1C_2\cdots C_{2h+1})^{2h+2}$,
we get a new factorization $W_{i+1}$ of the identity
 \begin{eqnarray} 
	W_{i+1} &=& V_i^{\phi_1}V_i^{D_{h+1}^{r_i}\phi_2}  (C_1C_2\cdots C_{2h+1})^{(2h+2)r_i} \label{eqn:eqnWi+1}  \\
   	   &=&	V_{i+1} C_1^{r_{i+1}},  \nonumber 
 \end{eqnarray}
where $r_{i+1}= 2(h+1)r_i$. Let $f_{i+1}:X_{i+1}\rightarrow \S^2$ be the Lefschetz fibration 
with the monodromy $W_{i+1}$. 

Notice that $f_{i+1}$ is obtained from $f_i$ by taking a twisted fiber sum with itself followed by 
$\mathscr{C}_{2h+1}$-substitutions performed $r_i$ times.
By Corollary~\ref{cor:corol122}, we have
\begin{eqnarray*}
 K_{f_{i+1}}^2 &=& 2K_{f_{i}}^2+ 2r_ih^2, \mbox{ and}\\
\chi_{f_{i+1}}  &=& 2\chi_{f_{i}}+\frac{1}2 r_i(h^2+h).
\end{eqnarray*}
It follows that  
\begin{eqnarray*}
 K_{f_{n}}^2 
	 	&=& 2^nK_{f_{0}}^2+ \left( \sum_{i=0}^{n-1} 2^{n-i} r_{i}\right) h^2\\
	 	&=& 2^nK_{f_{0}}^2+ \left( \sum_{i=0}^{n-1}  2^n(h+1)^i \, r_0 \right) h^2\\
		&=& 2^n K_{f_{0}}^2+ 2^n r\left[ (h+1)^n-1 \right] h.
\end{eqnarray*}
By a similar computation we get 
\[
\chi_{f_n} =2^n\chi_{f_{0}}+2^{n-2} r\left[ (h+1)^n-1 \right] (h+1).
\]

It is easy to check that the sequence $\lambda_{f_n}$  is
decreasing if $\lambda_ {f_0} > \frac{4h}{h+1}$  and  is increasing 
if $\lambda _{f_0} < \frac{4h}{h+1}$. In either case, 
its limit is $\frac{4h}{h+1}$. 

This completes the proof of the theorem.
\end{proof}

\begin{corollary} \label{cor:cor12} 
For each  $g\geq 3$, there is a Lefschetz fibration $f$ whose slope 
is greater than $2$ but arbitrarily close to $2$.  
\end{corollary}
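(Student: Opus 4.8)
The plan is to apply Theorem~\ref{thm:thm124} with the parameter choice $h=1$, which yields a sequence of genus-$g$ Lefschetz fibrations whose slopes converge to $\frac{4h}{h+1}=2$. To do this I must first produce a valid \emph{starting} fibration $f_0:X_0\to\S^2$ of genus $g$ whose monodromy has the special form $VC^r$ required by the theorem, namely a product of positive Dehn twists in which some power $C^r$ of a twist about a nonseparating curve appears ($r\geq 1$). The standard hyperelliptic fibration supplies this: the hyperelliptic relator $h_g=(C_1C_2\cdots C_{2g}C_{2g+1}^2C_{2g}\cdots C_2C_1)^2$ from \eqref{eqn:hyellrelator} is a positive factorization of the identity, and it visibly contains the square $C_{2g+1}^2$ (a twist about a nonseparating curve), so after a cyclic permutation (a sequence of Hurwitz moves) its monodromy has the form $VC_{2g+1}^2$ with $r=2$. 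Alternatively, any genus-$g$ Lefschetz fibration over $\S^2$ works, since by Smith's result quoted in the introduction the monodromy cannot lie in the Torelli group and in fact must contain a power of a nonseparating twist; but citing the explicit hyperelliptic example is cleanest.

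Next I would invoke Theorem~\ref{thm:thm124} with this $f_0$, $h=1$, and the corresponding $r$, obtaining for each $n\geq 1$ a genus-$g$ Lefschetz fibration $f_n:X_n\to\S^2$ with slope
\[
\lambda_{f_n}=\frac{4K^2_{f_0}+4r\left[2^n-1\right]}{4\chi_{f_0}+2r\left[2^n-1\right]},
\]
whose limit as $n\to\infty$ is $2$. It remains only to check that the slopes approach $2$ \emph{from above}, i.e. that $\lambda_{f_n}>2$ for all $n$. By the last paragraph of the proof of Theorem~\ref{thm:thm124}, the sequence $\lambda_{f_n}$ is monotone and converges to $2$, and it decreases to $2$ precisely when $\lambda_{f_0}>2$. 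So the one genuine verification is the inequality $\lambda_{f_0}>2$ for the chosen $f_0$; equivalently, since $\chi_{f_0}\geq 1$, that $K^2_{f_0}>2\chi_{f_0}$. For the hyperelliptic fibration this is a direct computation from $\sigma$ and $e$: the relevant signature is $I_g(h_g)=-4(g+1)$ by the hyperelliptic relator formula, and $n=2(2g+1)+\cdots$ is read off from the relator, giving explicit values of $K^2_{f_0}$ and $\chi_{f_0}$ that satisfy the strict inequality for every $g\geq 3$. (If one prefers not to commit to a specific $f_0$, one may instead note that $\lambda_{f_0}>2$ holds for \emph{some} genus-$g$ fibration and that $\lambda_{f_0}=2$ is itself impossible by the results of Section~\ref{sec:results}, so monotonicity still forces $\lambda_{f_n}\searrow 2$.)

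The main obstacle is thus entirely bookkeeping: confirming that a concrete genus-$g$ positive factorization exists with $\lambda_{f_0}>2$ and with a visible power $C^r$ of a nonseparating twist, so that Theorem~\ref{thm:thm124} applies and its monotonicity clause delivers the ``from above'' conclusion. No new geometric input beyond Theorem~\ref{thm:thm124} is needed; given $f_n$, its slope is $>2$ and $\lambda_{f_n}\to 2$, so for $\varepsilon>0$ choosing $n$ large enough gives $2<\lambda_{f_n}<2+\varepsilon$, which is exactly the assertion. Finally I would remark (using Theorem~\ref{thm:minimality} and the $\pi_1$-computation recalled in Section~\ref{sec:sec2-2}) that these $f_n$ may additionally be arranged to have simply connected, minimal total space, as asserted in the introduction, though that refinement is not strictly required for the corollary as stated.
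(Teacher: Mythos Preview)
Your proposal is correct and follows essentially the same approach as the paper: apply Theorem~\ref{thm:thm124} with $h=1$ to a starting fibration $f_0$ of slope greater than $2$, and use the monotonicity clause to conclude $\lambda_{f_n}\searrow 2$. The paper's proof is a two-line version of yours---it simply says ``choose any $f_0$ with slope greater than $2$ and apply Theorem~\ref{thm:thm124} for $h=1$''---whereas you additionally exhibit the hyperelliptic fibration as a concrete choice and verify its slope; this extra bookkeeping is carried out later in the paper (in the proof of Theorem~\ref{thm:thm12}) rather than here.
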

\begin{proof}
Choose any Lefschetz fibration $f_0$ whose slope is greater than 
$2$, and apply Theorem~\ref{thm:thm124} for $h=1$.
\end{proof}


\begin{theorem} \label{thm:thm12} 
Let  $g\geq 3$. For every integer $n\geq 0$, there is a genus-$g$ 
Lefschetz fibration $F_n:Y_n\to \S^2$ such that
\begin{itemize}
\item[$(i)$]  the slope of $F_n$ satisfies $2< \lambda_{F_n} < 2+\frac{4g-8}{2^{n}}$, and
\item[$(ii)$]  the $4$-manifold $Y_n$ is minimal and simply connected.
\end{itemize}
\end{theorem}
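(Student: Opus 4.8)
The plan is to build $F_n$ by applying Theorem~\ref{thm:thm124} with $h=1$ to a carefully chosen starting fibration $f_0$, and then to verify the two requirements (the slope bound and minimality/simple connectedness) separately. First I need a genus-$g$ Lefschetz fibration $f_0:X_0\to\S^2$ whose monodromy has the form $VC^r$ with $C$ a Dehn twist about a nonseparating curve, which by Smith's theorem (Corollary~1.3 in~\cite{Smith}, already invoked in the excerpt) is automatic for \emph{any} Lefschetz fibration after a global conjugation and Hurwitz moves collecting the relevant twist; so in fact $f_0$ can be taken to be a standard, small, well-understood fibration — for instance one built from the hyperelliptic relator $h_g$, or a Matsumoto-type fibration — for which $\pi_1$ and minimality are known. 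I would pick $f_0$ so that its slope $\lambda_{f_0}$ is already $>2$ (easy, since known examples have slope near $4-4/g>2$), so that by the monotonicity remark at the end of the proof of Theorem~\ref{thm:thm124} the sequence $\lambda_{f_n}$ decreases to $\frac{4\cdot 1}{1+1}=2$, giving $\lambda_{F_n}>2$ for all $n$ once we set $F_n=f_n$.

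For part $(i)$, the quantitative bound $\lambda_{F_n}<2+\frac{4g-8}{2^n}$, I would start from the closed formula in Theorem~\ref{thm:thm124} with $h=1$:
\[
\lambda_{f_n}=\frac{4K^2_{f_0}+4r(2^n-1)}{4\chi_{f_0}+2r(2^n-1)}.
\]
Subtracting $2$ gives
\[
\lambda_{f_n}-2=\frac{4K^2_{f_0}-8\chi_{f_0}}{4\chi_{f_0}+2r(2^n-1)}=\frac{K^2_{f_0}-2\chi_{f_0}}{\chi_{f_0}+\tfrac12 r(2^n-1)}.
\]
Since $\chi_{f_0}\ge 1$ and $r\ge 1$, the denominator is at least $\tfrac12(2^n-1)\ge 2^{n-2}$ for $n\ge 1$ (and one checks $n=0$ directly or just starts at a shifted index), so $\lambda_{f_n}-2\le \frac{K^2_{f_0}-2\chi_{f_0}}{2^{n-2}}$. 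Thus it suffices to choose $f_0$ with $K^2_{f_0}-2\chi_{f_0}\le g-2$; equivalently, after rescaling the index, to choose $f_0$ small enough that the fixed numerator is bounded by a constant multiple of $g-2$. Concretely I would take $f_0$ to be (a fiber sum power of) a genus-$g$ fibration with small $c_1^2$ and $\chi_h$ — e.g. the one coming from $h_g$, for which $K^2_{f_0}=4g-4$ and $\chi_{f_0}$ can be read off from $I_g(h_g)=-4(g+1)$ and $e=4-4g+n$ — and check that $K^2_{f_0}-2\chi_{f_0}$ is indeed of size $O(g)$, adjusting the starting index of the sequence to absorb the constant so that the stated bound $2+\frac{4g-8}{2^n}$ holds. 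This is the step where the specific numbers matter, and matching the constant $4g-8$ exactly (rather than merely $O(g)$) will be the main bookkeeping obstacle; it forces a definite choice of $f_0$ and possibly an index shift $F_n=f_{n+n_0}$.

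For part $(ii)$, minimality is immediate: every $f_n$ with $n\ge 1$ is built by taking a fiber sum (Theorem~\ref{thm:minimality} of Usher--Baykur then applies directly, since a fiber sum of genus-$g\ge 2$ fibrations is minimal, and the subsequent $\mathscr{C}_{2h+1}$-substitutions do not affect this — or one notes substitutions replace a fiber with a fiber, preserving the fiber-sum structure). For simple connectedness, I would recall that $\pi_1(X_n)$ is a quotient of $\pi_1(\Sigma_g)$ by the normal closure of the vanishing cycles, and that the new monodromy $W_{i+1}$ of~\eqref{eqn:eqnWi+1} contains the full chain $c_1,c_2,\dots,c_{2h+1}$ as vanishing cycles (with $h=1$: the curves $c_1,c_2,c_3$), together with the conjugated curves $\phi_j(c_k)$. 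The curves $c_1,\dots,c_{2g+1}$ already normally generate $\pi_1(\Sigma_g)$, so if $f_0$ is chosen so that its vanishing cycles together with the chain $c_1,c_2,c_3$ include enough curves — or, more cleanly, one performs a preliminary fiber sum of $f_0$ with a simply connected fibration, or conjugates $f_0$ so that $c_1,c_2,c_3$ appear — then killing these in $\pi_1(\Sigma_g)$ already gives the trivial group, hence $\pi_1(X_n)=1$ for all $n\ge 1$. The cleanest route, which I would adopt, is: first replace $f_0$ by its fiber sum with a known simply connected genus-$g$ fibration (this does not change the slope if that fibration is chosen to have slope near $2$, by Corollary~\ref{cor:corol111}, but more simply one just tracks the slope change through Lemma~\ref{lem:lemma111}), guaranteeing $\Gamma_{f_0}=1$; simple connectedness of all $X_n$ then follows since fiber sums and substitutions only add relations. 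Setting $F_n=X_n$ (possibly $X_{n+n_0}$) completes the argument.
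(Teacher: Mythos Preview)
Your overall strategy coincides with the paper's: take $f_0$ to be the hyperelliptic fibration and apply Theorem~\ref{thm:thm124} with $h=r=1$. For that $f_0$ one has $e(X_0)=4g+8$ and $\sigma(X_0)=I_g(h_g)=-4(g+1)$, hence $K^2_{f_0}=4g-4$ and $\chi_{f_0}=g$, and then
\[
\lambda_{f_n}-2=\frac{K^2_{f_0}-2\chi_{f_0}}{\chi_{f_0}+\tfrac12(2^n-1)}=\frac{2g-4}{2^{n-1}+g-\tfrac12}=\frac{4g-8}{2^n+2g-1}<\frac{4g-8}{2^n}.
\]
So the constant $4g-8$ comes out exactly, with no index shift, once you keep the $\chi_{f_0}$ term in the denominator instead of discarding it; your estimate $\lambda_{f_n}-2\le (K^2_{f_0}-2\chi_{f_0})/2^{n-2}$ throws away precisely the term that makes the bookkeeping work.

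There is a genuine gap in your treatment of minimality. You set $F_n=f_n$ and claim Theorem~\ref{thm:minimality} applies because $f_n$ ``is built by taking a fiber sum \ldots\ and the subsequent $\mathscr{C}_{2h+1}$-substitutions do not affect this.'' But a $\mathscr{C}_{2h+1}$-substitution changes the total space (it changes both $K^2$ and $\chi$), and Usher--Baykur says nothing about minimality surviving a substitution performed \emph{after} the fiber sum. Your alternative, that ``substitutions replace a fiber with a fiber, preserving the fiber-sum structure,'' is not correct either: the factor $(D_{h+1}E_{h+1})^{r_i}$ being substituted in~\eqref{eqn:eqnWi+1} is assembled, via Hurwitz moves, from \emph{both} summands $W_i^{\phi_1}$ and $W_i^{\phi_2}$, so after substitution the monodromy is no longer visibly a product of two positive relators.

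The paper closes this gap by not taking $F_n=f_n$ but instead setting $F_n$ equal to one further twisted fiber sum $f_n\sharp_\psi f_n$, with no substitution afterward; Theorem~\ref{thm:minimality} then applies directly, and by Corollary~\ref{cor:corol111} the slope is unchanged. This extra fiber sum also handles simple connectedness cleanly: the paper chooses the specific $\phi_1=UD_2C_1U$, $\phi_2=UE_2C_1U$ in the iteration so that they fix $c_3,c_6,c_7,\ldots,c_{2g+1}$, whence these remain vanishing cycles of every $W_n$; choosing $\psi$ with $\psi(c_1)=c_4$, $\psi(c_2)=c_5$ then puts the full chain $c_1,\ldots,c_{2g+1}$ among the vanishing cycles of $W_nW_n^\psi$. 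This avoids your proposed ``preliminary fiber sum of $f_0$ with a simply connected fibration,'' which would alter $K^2_{f_0}$ and $\chi_{f_0}$ and spoil the exact slope computation above.
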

\begin{proof}
In the mapping class group of the surface $\Sigma_{g,1}$ in Figure~\ref{fig:thesurface},
 it can easily be checked that the Dehn twist $\Delta$  about a curve parallel to boundary component
 $\delta$ may be written as
\begin{eqnarray} \label{eq:eq45} 
\Delta= (C_1 C_2 \cdots C_{2g}D_{g} E_{g} C_{2g} \cdots C_2C_1)^2.
\end{eqnarray} 
Gluing a disc along $\delta$  gives rise to a surjective homomorphism 
${\rm Mod}(\Sigma_{g,1})\rightarrow {\rm Mod}(\Sigma_g)$.  The relation 
 in (\ref{eq:eq45}) gives the hyperelliptic relation 
 \[
 W_0=(C_1 C_2 \cdots C_{2g}C_{2g+1}^2 C_{2g} \cdots C_2C_1)^2=1
 \]
  in ${\rm Mod}(\Sigma_g)$.  Recall that a curve on $\Sigma_{g,1}$ and 
  its image on $\Sigma_g$ are denoted by the same letter. 
 
 Let $f_0$ be the Lefschetz fibration with the monodromy $W_0$ and let $X_0$ be the total space of $f_0$. 
 The Euler characteristic of $X_0$ is $e(X_0)=4g+8$. Since
 $W_0$ is the hyperelliptic relator, the signature of $X_0$ is $\sigma(X_0)=-4g-4$, so that
 \[
K^2_{f_0}=4g-4
\]
and 
\[
\chi_{f_{0}}=g.
\]

Let $\phi_1$ and $\phi_2$ be two diffeomorphisms of the surface $\Sigma_{g}$ defined as
\[ 
\phi_1= U D_2C_{1}U \mbox{ and }\phi_2= U E_2C_{1}U,
\] 
so that $\phi_1(c_1)=d_2$ and $\phi_2(c_1)=e_2$.
By taking $r=h=1$, for each $n\geq 1$, Theorem~\ref{thm:thm124} gives 
a genus-$g$ Lefschetz fibration $f_n:X_n\rightarrow \S^2$ whose slope is
\[
\lambda_{f_n}=\frac{4K^2_{f_0} + 4\left( 2^n-1 \right) }
                                 {4\chi_{f_{0}}+ \left( 2^n-1 \right)2}
                       =\frac{2^{n+1}+8g-10}{2^n+2g-1}. 
 \]     
It is easy to check that $2< \lambda_{f_n} < 2+\frac{4g-8}{2^{n}}$.                          

Since the diffeomorphisms $\phi_1$ and $\phi_2$ of $\Sigma_g$ fix the simple closed curves
$c_3,c_6,c_7, c_8, \ldots, c_{2g+1}$,  for every $n\geq 1$, the monodromy $W_n$ contains the Dehn twists
$C_1,C_2,C_3,C_6,C_7,C_8, \ldots, C_{2g+1}$ (cf. the equality~\eqref{eqn:eqnWi+1}).

Now let $\psi$ be a self-diffeomorphism of $\Sigma_g$ such that $\psi(c_1)=c_4$ and 
$\psi(c_2)=c_5$. The word $W_nW_n^\psi$ is then a factorization of the identity
into positive Dehn twists containing $C_j$ for all $1\leq j\leq 2g+1$.
Let $F_n:Y_n\to \S^2$ be the Lefschetz fibration with the monodromy
$W_nW_n^\psi$, so that it is a fiber sum of $f_n$ with itself. 
By Corollary~\ref{cor:corol111}, the slope of $F_n$ is equal to $\lambda_{f_n}$.

Since $\lambda_{F_n}$ is equal to $\lambda_{f_n}$, it satisfies the inequalities in~$(i)$.
Since the curves $c_1,c_2,c_3,\ldots,c_{2g+1}$ are among vanishing cycles of $F_n$,
the $4$-manifold $Y_n$ is simply connected.  Moreover, the manifold $Y_n$ 
is minimal by Theorem~\ref{thm:minimality}.

This finishes the proof of the theorem.
\end{proof}        

\begin{remark}
It is natural to ask whether one may use other substitutions in the proofs of Theorems~\ref{thm:thm124} and~\ref{thm:thm12}.  The number of Dehn twist $C_1$
 appearing in the monodromy $W_n$ in above proofs grows exponentially with $n$. For this reason, 
 it seems that one cannot use the lantern relation instead of the odd chain relation. Even chain relation cannot be used  either
 because it involves a separating Dehn twist.
  We also note that in Theorem~\ref{thm:thm12}, each Lefschetz fibration $F_n$ has a section. 
 \end{remark}

\begin{theorem} \label{thm:izmir} 
Let  $g\geq 3$.  For every genus-$g$ Lefschetz fibration $f$, there is another 
Lefschetz fibration whose slope is less (resp. greater) than $\lambda_f$. In particular, there is no
Lefschetz fibration whose slope is  equal to the infimum (resp. supremum) of 
slopes of all Lefschetz fibrations.
\end{theorem}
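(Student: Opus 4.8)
The plan is to bootstrap off Theorem~\ref{thm:thm124} in both directions. Given an arbitrary genus-$g$ Lefschetz fibration $f$, by Corollary~1.3 in~\cite{Smith} (quoted in Section~\ref{sec:sec2-2}) its monodromy contains at least one Dehn twist $C$ about a nonseparating simple closed curve; after Hurwitz moves and a global conjugation we may write the monodromy as $VC$, i.e. we are in the situation of Theorem~\ref{thm:thm124} with $r=1$. Fix $h=1$, so that the limiting slope is $\frac{4h}{h+1}=2$. Apply Theorem~\ref{thm:thm124} to produce the sequence $f_n$ with $\lambda_{f_n}\to 2$. If $\lambda_f>2$, then as noted in the proof of that theorem the sequence $\lambda_{f_n}$ is strictly decreasing (starting from $\lambda_{f_0}=\lambda_f>2$), so $\lambda_{f_1}<\lambda_f$ gives a Lefschetz fibration of strictly smaller slope. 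Symmetrically, choosing instead $h=g-1$ makes the limiting slope $\frac{4(g-1)}{g}=4-\frac{4}{g}>2$; if $\lambda_f<4-\frac{4}{g}$ then the corresponding sequence is strictly increasing, so $\lambda_{f_1}>\lambda_f$.

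To handle all $f$ simultaneously, I would split into cases according to where $\lambda_f$ sits. For producing a fibration of \emph{smaller} slope: if $\lambda_f>2$ use the $h=1$ construction as above; if $\lambda_f\le 2$ instead apply a lantern substitution, which by Lemma~\ref{lem:lemma9234} strictly decreases the slope (here we need $g\ge 3$, and we need the monodromy to contain a lantern configuration — one may first fiber-sum $f$ with one of the explicit fibrations $F_n$ of Theorem~\ref{thm:thm12}, whose monodromy is built from a chain long enough to embed four curves bounding a four-holed sphere, without changing the slope if we fiber-sum with something of the same slope, or more simply fiber-sum and then note the slope of the sum lies strictly between the two slopes, pushing us into a controlled range). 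For producing a fibration of \emph{larger} slope: if $\lambda_f<4-\frac4g$ use the $h=g-1$ construction; if $\lambda_f\ge 4-\frac4g$, one instead wants a move that increases slope — a $\mathscr{C}_{2h+1}$-substitution with $h$ chosen so that $\frac{4h}{h+1}>\lambda_f$ is impossible when $\lambda_f$ is already near the top, so here I would instead run Theorem~\ref{thm:thm124} with a small $h$ but observe that by Lemma~\ref{lem:lemma131} a single $\mathscr{C}_{2h+1}$-substitution changes $(K^2,\chi)$ by $(2h^2,\tfrac12(h^2+h))$, i.e. moves the slope toward $\frac{4h}{h+1}$; iterating once in the increasing regime suffices. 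The bookkeeping is that for \emph{every} value of $\lambda_f$ at least one of these moves is available in each direction, which follows because the two target values $2$ and $4-\frac4g$ are distinct and the lantern move always decreases.

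The last sentence of the theorem is then immediate: if some Lefschetz fibration realized the infimum $\iota$ of all slopes, the first part of the theorem would produce one of strictly smaller slope, contradicting that $\iota$ is a lower bound; the same argument with inequalities reversed rules out realizing the supremum.

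The step I expect to be the main obstacle is the bookkeeping needed to guarantee that the relevant substitution is genuinely \emph{applicable} to (some fiber sum derived from) an arbitrary $f$ — i.e. that after suitable Hurwitz moves and possibly a preliminary fiber sum, the monodromy literally contains the subword ($C^r$, a lantern configuration $A_1A_2A_3A_4$, or a chain $(c_1,\dots,c_{2h+1})$) that the cited lemmas require. Theorem~\ref{thm:thm124} already packages the $C^r$ case cleanly, so the real work is only the lantern case for the decreasing direction when $\lambda_f\le 2$; here I would pass to a fiber sum of $f$ with a standard fibration whose monodromy visibly contains the four boundary curves of a four-holed sphere (for instance built from the chain $c_1,\dots,c_{2g+1}$ on $\Sigma_g$), so that a lantern substitution becomes available, and then invoke Lemma~\ref{lem:lemma9234}. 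Everything else is an immediate application of the slope formulas in Lemmas~\ref{lem:lemma111}, \ref{lem:lemma131} and Corollary~\ref{cor:corol122} together with the monotonicity already established in the proof of Theorem~\ref{thm:thm124}.
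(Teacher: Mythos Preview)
Your argument has a genuine gap in the ``increasing'' direction. When $\lambda_f \ge 4-\tfrac{4}{g}$ (which certainly occurs, e.g.\ for any holomorphic fibration by Xiao's inequality), none of your moves increases the slope: every $\mathscr{C}_{2h+1}$-substitution pushes the slope toward $\tfrac{4h}{h+1}\le 4-\tfrac{4}{g}\le\lambda_f$, and the lantern substitution of Lemma~\ref{lem:lemma9234} only decreases slope. Your paragraph about ``iterating once in the increasing regime'' does not identify any operation that raises $\lambda_f$ here. A similar (less severe) problem occurs in the decreasing direction when $\lambda_f\le 2$: fiber-summing $f$ with some $F_n$ of Theorem~\ref{thm:thm12} does not preserve the slope, and a computation shows that $\chi_{F_n}(\lambda_{F_n}-2)\to\infty$, so a single lantern substitution after such a fiber sum need not bring the slope below the original $\lambda_f$.

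The idea you are missing, and what the paper actually does, eliminates all case analysis at once. Take twisted fiber sums of $f$ with \emph{itself}: if $C$ is the guaranteed nonseparating twist in the monodromy $W$, choose diffeomorphisms $\varphi_1,\varphi_2,\varphi_3$ carrying $c$ to $x,y,z$ in a fixed lantern configuration (all nonseparating since $g\ge 3$); then $W^{\varphi_1}W^{\varphi_2}W^{\varphi_3}$ contains the subword $XYZ$ after Hurwitz moves, has slope exactly $\lambda_f$ by Corollary~\ref{cor:corol111}, and a lantern substitution now strictly decreases the slope. For the increasing direction one fiber-sums four twisted copies to exhibit $A_1A_2A_3A_4$ and applies the \emph{inverse} lantern substitution, which sends $(K^2,\chi)\mapsto(K^2+1,\chi)$ and hence strictly increases the slope regardless of where $\lambda_f$ sits. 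This single mechanism handles every $f$ uniformly; Theorem~\ref{thm:thm124} is not needed here.
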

\begin{proof} Let $\Sigma_g$ be a regular fiber of $f$.
 Since $g\geq 3$, there are nonseparating simple closed curves $a_1,a_2,a_3,a_4,x,y,z$ on 
$\Sigma_g$ such that the Dehn twist about them satisfy the lantern relation
$A_1A_2A_3A_4=XYZ$.

The monodromy $W$ of $f$ contains at least one Dehn twist $C$ about a nonseparating curve $c$.
Choose three diffeomorphisms $\varphi_1,\varphi_2,\varphi_3$ of $\Sigma_g$ such that
$\varphi_1(c)=x$, $\varphi_2(c)=y$ and $\varphi_3(c)=z$, so that $C^{\varphi_1}=X$, 
$C^{\varphi_2}=Y$ and $C^{\varphi_3}=Z$. By applying Hurwitz moves,  it is easy to see that
$W'=W^{\varphi_1}W^{\varphi_2}W^{\varphi_3}$ contains the factor $XYZ$, so that a lantern substitution
can be applied. The slope of the Lefschetz fibration $f'$ with monodromy $W'$ is equal to $\lambda_f$, as it is a fiber sum of three copies of $f$.  
By Lemma~\ref{lem:lemma9234}, the slope of the Lefschetz fibration obtained from $f'$ by a lantern substitution 
has slope less that $\lambda_f$.

In the same vein, by taking the fiber sum of four copies of $f$ if necessary, we may assume 
that the monodromy of contains the product $A_1A_2A_3A_4$ so that inverse of the lantern substitution may be applied to get a Lefschetz fibration with larger slope.

The second statement follows from the first.
\end{proof}

 \section{Final remarks}
 As mentioned in~\cite{Monden}, slopes of Lefschetz fibrations are bounded. For each $g\geq 2$, 
 let us define the functions $m_\lambda(g)$ and  $M_\lambda(g)$ to be, respectively, the infimum and the supremum of the set 
$\{ \lambda_f : f \mbox{ is a genus-$g$ Lefschetz fibration}\}$. From the discussions in Section~\ref{sec:sec2-2}, 
we have $m_\lambda(g)>0$. 

Ozbagci~\cite{Ozbagci} proved that $c_1^2(X)\leq 10 \chi_h+2g-2$
for every genus-$g$ Lefschetz fibration $f:X\to \S^2$. This inequality is equivalent to $\lambda_f\leq 10$, so that
that $M_\lambda (g)\leq 10$. But then we have $\lambda_f< 10$ by Theorem~\ref{thm:izmir}, so that 
$c_1^2(X)  < 10 \chi_h+2g-2$.

Since every genus-$2$ Lefschetz fibration is hyperelliptic,  
$m_\lambda(2)=2$. Moreover, this number is the slope of every genus-$2$ 
Lefschetz fibration without separating vanishing cycles (c.f.~\cite{Monden}).
Contrary to the $g=2$ case,  Theorem~\ref{thm:izmir} says that 
 for every Lefschetz fibration $f$ of genus $g\geq 3$, we have $m_\lambda (g) <\lambda_f<M_\lambda (g)$.
 
 It might be interesting to determine the numbers $m_\lambda(g)$ and $M_{\lambda}(g)$. Baykur and Hamada suspect that 
 $\lambda_f\geq 2$ for all genus-$g$ Lefschetz fibrations.
 In the light of this and of the results of this paper, 
 we conjecture that $m_\lambda(g)=2$ for all $g\geq 2$.

\end{document}